\documentclass[a4paper]{amsart}
\usepackage{amsfonts}
\usepackage{amssymb}
\usepackage{graphicx}    % standard LaTeX graphics tool
\usepackage{multicol}    % used for the two-column index

\usepackage{ifdraft}
\usepackage{todonotes}

\usepackage{amsmath}
\usepackage[hidelinks]{hyperref}
\usepackage{tcolorbox}
\usepackage{mathrsfs}
\usepackage{amsthm}
\usepackage{lastpage}
\usepackage{fancyhdr}
\usepackage{accents}
\usepackage{stmaryrd}
\usepackage{tabularx}
\usepackage{tikz}
\usepackage{tikz-cd}
\usepackage{array}
\usepackage{mathtools}
\usepackage{comment}
\usepackage{subcaption}
\usepackage{adjustbox}
\usepackage{quiver}
\usepackage{colortbl}
\usepackage{pifont}
\DeclareFontFamily{OT1}{pzc}{}
\DeclareFontShape{OT1}{pzc}{m}{it}{<-> s * [1.10] pzcmi7t}{}
\DeclareMathAlphabet{\mathpzc}{OT1}{pzc}{m}{it}
\newtheorem{thm}{Theorem}[section]
\newtheorem{prop}[thm]{Proposition}
\newtheorem{cor}[thm]{Corollary}
\newtheorem{lem}[thm]{Lemma}
\newtheorem{conj}[thm]{Conjecture}

\newtheorem*{thma}{Theorem \ref{truemain}}
\newtheorem*{thmb}{Theorem \ref{maindouble}}
\theoremstyle{definition}
\newtheorem{defin}[thm]{Definition}
\newtheorem{construction}[thm]{Construction}
\newtheorem{exm}[thm]{Example}

\newtheorem{note}[thm]{Note}
\newtheorem{notation}[thm]{Notation}

\def\Z{\mathbb{Z}}    %%%%%%%%% the set of integers
\def\K{\mathcal{K}}
\def\H{\tilde{H}}
\def\A{\mathcal{A}}

\def\ZZ{\mathcal{Z}}

\def\R{\mathbb{R}}

\def\Z{\mathbb{Z}}
\def\L{\mathcal{L}}

\def\ker{\text{ker }}
\def\im{\text{Im }}

\def\F{\mathbb{F}}
\def\rank{\text{rank }}

\def\J{{J^c}}

\usepackage{tcolorbox}
\newtcolorbox{donbox}{colback=violet!10!white,colframe=violet!75!black,fonttitle=\bfseries,title=Stanley:}
\newtcolorbox{gabbox}{colback=teal!10!white,colframe=teal!75!black,fonttitle=\bfseries,title=Valenzuela:}
\newtcolorbox{daibox}{colback=teal!10!white,colframe=teal!75!black,fonttitle=\bfseries,title=Kishimoto:}

\usepackage{etoolbox}
\makeatletter
\patchcmd{\@maketitle}
  {\ifx\@empty\@dedicatory}
  {\ifx\@empty\@date \else {\vskip3ex \centering\footnotesize\@date\par\vskip1ex}\fi
   \ifx\@empty\@dedicatory}
  {}{}
\patchcmd{\@adminfootnotes}
  {\ifx\@empty\@date\else \@footnotetext{\@setdate}\fi}
  {}{}{}
\makeatother

\newtheorem{innercustomgeneric}{\customgenericname}
\providecommand{\customgenericname}{}
\newcommand{\newcustomtheorem}[2]{%
  \newenvironment{#1}[1]
  {%
   \renewcommand\customgenericname{#2}%
   \renewcommand\theinnercustomgeneric{##1}%
   \innercustomgeneric
  }
  {\endinnercustomgeneric}
}\newcustomtheorem{customprop}{Proposition}
\newcustomtheorem{customthm}{\textbf{Theorem}}

\begin{document}

\title{Moment angle complexes and duality for tight manifolds}

\author{Daisuke Kishimoto, Donald Stanley, Carlos Gabriel Valenzuela Ruiz}

\date{\today}

\maketitle
\thispagestyle{empty}
\begin{abstract}
For a field $\F$ and a triangulated compact $\F$-orientable manifold $\K$, consider the homology of the associated Moment-Angle complex $H_*(\ZZ_\K;\F)$. We show the total homology rank $\beta(\ZZ_\K;\F)$ satisfies the inequality  $\beta(\ZZ_\K;\F)\geq 2^{m-1}(\beta(\K;\F)-2)+2$, with equality occurring exactly when the triangulation is $\F-tight$. Using Lefschetz duality, we introduce a short exact sequence of functors that, in turn, introduces a new duality theorem in Double Homology for tight manifold triangulations.

\end{abstract}

\maketitle
\setcounter{tocdepth}{1}
\tableofcontents

\section{Introduction}

    Throughout this work, all simplicial complexes will be finite and abstract (see Definition \ref{simpdef}) and their homology will be simplicial with coefficients in some field $\F$ unless otherwise specified. By triangulated manifold, we mean a simplicial complex $\K$ such that its geometric realization is a closed topological manifold.\\

    Let $\K$ be a simplicial complex with vertex set $[m]$ and for every $J\subseteq [m]$ denote by $\K_J$ the full subcomplex consisting of simplices in $\K$ that are subsets of  $J$. We say a complex $\K$ is $\F$-tight if for all $J\subseteq [m]$, the inclusion $\K_J\subseteq\K$ induces an injective map in homology with coefficients in $\F$. The idea of tight triangulations of manifolds was introduced in \cite{Kuhnel}, inspired by economical embeddings of spaces in an Euclidean space; Künhel and Lutz conjectured that a triangulation is strongly minimal, that is, it has the least number of faces in every dimension only if it is tight. They verified the conjecture for dimensions 1 and 2 in \cite{KunLutz}; dimension 3 was verified by Baguchi, Data, and Spreer in \cite{BagDatSpreer}. Little is known about higher dimensions.\\
    
    Intuitively a manifold triangulation being tight suggests that if we restrict to a subset of points the topological features of that subcomplex aren't lost when completing the complex. It is also worth noting that tightness is a global feature of a complex that is obtained by looking at each possible vertex restriction individually. This thought process is where we make the connection to \textit{moment-angle complexes} a central object in the field of Toric Topology (see \cite[Ch. 4]{ToricTop}) that encodes the combinatorial information of a simplicial complex $\K$ in $\ZZ_\K$ a subspace of the Euclidean Space $\mathbb{C}^m$ (see Section 2.3). As it turns out, the cohomology ring of $\ZZ_\K$ splits as a direct sum of the cohomology of every subcomplex of $\K$ (see Proposition \ref{Hochster}). Let us denote by $\beta(X;\F)$ the sum of the betti numbers of $X$ with coefficients in $\F$, the splitting mentioned before and Poincaré-Alexander-Lefschetz duality (Prop. \ref{pal}) lets us characterize tightness in our first result:

    \begin{thma}
        Let $\F$ be a field and $\K$ a triangulated closed $\F$-orientable $n$-manifold on $m$ vertices. Then
        \[\beta(\ZZ_\K;\F)\geq2^{m-1}(\beta(\K;\F)-2)+2\]
        with equality if and only if $\K$ is $\F$-tight.
    \end{thma}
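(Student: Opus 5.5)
Using Hochster's formula (Proposition \ref{Hochster}), I would first write
\[\beta(\ZZ_\K;\F)=1+\sum_{\emptyset\neq J\subseteq[m]}\beta(\tilde H_*(\K_J;\F)),\]
where $\beta(\tilde H_*)$ is the total rank of reduced homology, the $1$ coming from $J=\emptyset$ (with the convention $\tilde H_{-1}(\emptyset)=\F$). The plan is to group the subsets into complementary pairs $\{J,\J\}$ with $J\neq\emptyset,[m]$. This gives $2^{m-1}-1$ pairs. The remaining terms are $J=[m]$, contributing $\beta(\K)-1$, and $J=\emptyset$, contributing $1$. It then suffices to show that for every proper nonempty $J$,
\[\beta(\tilde H_*(\K_J))+\beta(\tilde H_*(\K_{\J}))\geq \beta(\K)-2,\]
with equality if and only if both inclusions $\K_J\subseteq\K$ and $\K_{\J}\subseteq\K$ are injective in homology. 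Summing over the pairs gives
\[\beta(\ZZ_\K)\geq 1+(\beta(\K)-1)+(2^{m-1}-1)(\beta(\K)-2)=2^{m-1}(\beta(\K)-2)+2.\]
Equality holds exactly when every inclusion of a full subcomplex is injective, i.e.\ when $\K$ is $\F$-tight; the cases $J=\emptyset,[m]$ are trivially injective.

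For the key inequality I would use Poincaré--Alexander--Lefschetz duality (Prop.~\ref{pal}). Since $\K_{\J}$ is a deformation retract of $|\K|\setminus|\K_J|$ (the standard fact for full subcomplexes), we get $\tilde H_i(\K_{\J})\cong H^{n-i}(\K,\K_J)$ with appropriate reduced conventions. Over a field, $H^{n-i}(\K,\K_J)$ has the same rank as $H_{n-i}(\K,\K_J)$. So $\beta(\tilde H_*(\K_{\J}))=\beta(H_*(\K,\K_J))$, and the inequality reduces to
\[\beta(\tilde H_*(\K_J))+\beta(H_*(\K,\K_J))\geq\beta(\tilde H_*(\K))=\beta(\K)-1.\]
Here I have to be careful: the statement has $\beta(\K)-2$, so the reduced/unreduced and top-degree bookkeeping must be checked. $\K_J\neq\emptyset$ and its complement is nonempty, so $\tilde H_n(\K_J)=0$ and $H_0(\K,\K_J)=0$. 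I would therefore work with the long exact sequence of the pair in reduced homology. Exactness gives $\operatorname{rank}\tilde H_i(\K)=\operatorname{rank}\operatorname{im}(i_*)+\operatorname{rank}\operatorname{im}(j_*)$, so
\[\beta(\tilde H_*(\K_J))+\beta(H_*(\K,\K_J))=\beta(\tilde H_*(\K))+2\sum_i\operatorname{rank}\ker(i_*:\tilde H_i(\K_J)\to\tilde H_i(\K)).\]
This gives a lower bound $\beta(\K)-1$ with equality iff $\tilde H_*(\K_J)\to\tilde H_*(\K)$ is injective.

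The discrepancy of one between this bound and $\beta(\K)-2$ is the step I expect to be the main obstacle. I believe it is resolved in the duality step. Alexander--Lefschetz duality for the complement pairs $\tilde H_i(\K_{\J})$ with $H^{n-i}(\K,\K_J)$, but in degree $0$/top degree a correction appears. For instance, $\tilde H_0(\K_{\J})$ corresponds to $H^n(\K,\K_J)$ modulo the fundamental class contribution, since $H^n(\K,\K_J)\to H^n(\K)$ is surjective with $H^n(\K)\cong\F$ accounting for it. So precisely $\beta(\tilde H_*(\K_{\J}))=\beta(H_*(\K,\K_J))-1$. I would verify this through the exact sequence of the triple/pair near degree $n$. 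That yields the desired $\beta(\K)-2$.

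The symmetric roles of $J$ and $\J$ then give the equality condition. The pair sum is minimal iff $i_*$ is injective for $\K_J$. By duality, $\ker i_*$ for $\K_J$ and for $\K_{\J}$ have the same rank (the dual statement), so injectivity for one is equivalent to injectivity for the other, which finishes the equality characterization.
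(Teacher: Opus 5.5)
Your proposal is correct and is essentially the paper's argument: the long exact sequence of the pair $(\K,\K_J)$ combined with Lefschetz duality gives $\beta(\K_J)+\beta(\K_{J^c})=\beta(\K)+2\sum_q\operatorname{null} i_q(J)$. Summing this over subsets then gives the bound, and your pairing of complementary subsets is equivalent to the paper's summing over all $J$ and using $\sum_J\beta(\K_{J^c})=\sum_J\beta(\K_J)$.

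The off-by-one you flagged is only an artifact of passing to reduced homology too early. The duality $H_i(\K_{J^c})\cong H^{n-i}(\K,\K_J)$ holds for unreduced groups in every degree, so your first identity $\beta(\tilde H_*(\K_{J^c}))=\beta(H_*(\K,\K_J))$ was false. Your corrected version with $-1$ holds simply because $\K_{J^c}\neq\emptyset$, with no special analysis near degree $n$ needed. The paper sidesteps this by working with unreduced Betti numbers throughout and converting to reduced ones via Hochster only at the end.
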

    
    This result turns the problem of verifying injectivity of $2^m$ maps of the form $H_*(\K_J)\to H_*(\K)$ into that of computing the cohomology of the moment-angle complex $\K$.\\

    In \cite{Limonchenko_2023}, the authors defined the double homology functor $DH_{*,*}$ (originally denoted as $HH$); this construction was designed to solve a stability problem in the use of Tor-complexes for persistent homology \cite{Stab}. Apart from this application, this functor encodes non-trivial combinatorial properties of simplicial complexes in relatively small finite-dimensional algebras, raising interest in questions regarding its rank and internal structure 
    \cite{HAN2023108421}\cite{ruiz2024spheretriangulationsdoublehomology}\cite{zhang2024rankdoublecohomologymomentangle}\cite{caputi2025bridginguberhomologydoublehomology}\cite{STANLEY2025109319}. \\
    
    In Section 4, we introduce a generalisation of double homology to abelian poset functors of the form $F: 2^{[m]}\to \A$ and denote it as $H^*(F)$. This construction is reminiscent of Khovanov's work on the Jones' polynomial \cite{jonespoly}. We refer the reader to \cite[\S6]{chandler} for an exploration of this construction in more generality. For our second result, we introduce a sequence of maps arising from Lefschetz duality $\eta_q:H^*(H_q(\K_-))\to H^*(H^{n-q}(\K_{-^c}))$ (see Equation 5) for an $n$-dimensional triangulated manifold $\K$, which defines a new duality when $\K$ is tight.
    
    \begin{thmb}
        For every integer $q$, the map $\eta_q$ is an isomorphism whenever $\K$ is tight.
    \end{thmb}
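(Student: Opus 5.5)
We have to guess the setup. $H^*(F)$ for a functor $F:2^{[m]}\to\A$ is the cohomology of the cube complex $\bigoplus_J F(J)$, with differential given by the alternating sum of the maps $F(J)\to F(J\cup\{j\})$. The map $\eta_q$ comes from Lefschetz duality. For the full subcomplex $\K_J$, the complement of $\K_J$ in $\K$ deformation retracts onto $\K_{J^c}$. Lefschetz/Alexander duality then gives $H_q(\K_J)\cong H^{n-q}(\K,\K_{J^c})$, and the long exact sequence of the pair supplies a map $H^{n-q}(\K,\K_{J^c})\to H^{n-q}(\K)$. My guess is that the paper's "short exact sequence of functors" reads
\[0\to \mathrm{coker}\bigl(H^{n-q-1}(\K)\to H^{n-q-1}(\K_{-^c})\bigr)\to H_q(\K_-)\to \ker\bigl(H^{n-q}(\K)\to H^{n-q}(\K_{-^c})\bigr)\to 0,\]
or the analogous sequence with the two terms arranged so that $\eta_q$ is the composite into $H^{n-q}(\K_{-^c})$ followed by the identification via the cube of complements.

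\textbf{Plan.} Tightness says $H_*(\K_J)\to H_*(\K)$ is injective for every $J$. Over a field, dualising shows that $H^*(\K)\to H^*(\K_J)$ is surjective for every $J$, in particular for every $J^c$.

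The first step is to use this surjectivity to kill the cokernel term. Every restriction $H^{n-q-1}(\K)\to H^{n-q-1}(\K_{J^c})$ is onto, so that term vanishes functorially. The short exact sequence then collapses to an isomorphism of functors between $H_q(\K_-)$ and $K(J)=\ker\bigl(H^{n-q}(\K)\to H^{n-q}(\K_{J^c})\bigr)$.

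The second step uses surjectivity again, now in degree $n-q$, to get a short exact sequence of functors
\[0\to K(-)\to \underline{H^{n-q}(\K)}\to H^{n-q}(\K_{-^c})\to 0,\]
where the middle term is the constant functor. The cube complex of a constant functor on $2^{[m]}$ is acyclic, apart from a possible single class depending on the indexing conventions, since it is the augmented chain complex of a simplex. The long exact sequence in $H^*(-)$ therefore gives a connecting isomorphism $H^*(H^{n-q}(\K_{-^c}))\cong H^{*\pm1}(K)$, up to the constant-functor correction.

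Composing with the isomorphism from the first step should yield $\eta_q$, possibly up to sign and shift. The precise definition of $\eta_q$ in Equation 5 would have to be matched against this composite.

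\textbf{Main obstacle.} The hard part is twofold. First, I must check naturality, i.e.\ that the duality isomorphisms commute with the inclusion-induced maps. Second, I must handle the contribution of the constant functor and the degrees $q=0,n$, where $\K_\emptyset$ and $\K_{[m]}$ behave degenerately. Naturality for complements is contravariant in $J$, which is exactly why the target involves $-^c$. I expect to settle this by taking $\eta_q$ itself to be the natural map out of the Lefschetz pairing, so that naturality is built in. For the edge cases, I would use reduced homology conventions so that the constant part cancels, or I would treat $J=\emptyset,[m]$ directly.
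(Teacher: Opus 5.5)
Your argument is the paper's proof written in Poincaré-dual form. Your cokernel term and $K(-)$ correspond to $\ker i_q$ and $\mathrm{im}\, i_q$. The paper likewise uses tightness to get $\ker i_q=\ker i_{q-1}=0$, so that $i_q^*$ and the map induced by $\mathrm{im}\, p_q\hookrightarrow H^{n-q}(\K_{-^c})$ are isomorphisms, and then inverts the connecting isomorphism coming from the acyclic diagonal functor $\Delta(H_q(\K))$.

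Your remaining worries do not arise. Since $C^*$ includes the summand $J=\emptyset$, the diagonal functor is genuinely acyclic with no correction class. Unreduced homology, with $H_*(\K_\emptyset)=0$, handles the extreme subsets. Naturality is exactly the content of Proposition~\ref{les}.
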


\section{Preliminaries}
\subsection{Triangulations}
\begin{defin}\label{simpdef}
    A \textit{simplicial complex} $\K$ on an ordered set $S$ is a non-empty collection of subsets of $S$ such that $\tau\subseteq\sigma\in \K$ implies $\tau\in\K$. Throughout this work, the vertex set $S$ will be finite and therefore of the form $[m]:=\{1,\cdots, m\}$ for some integer $m$. 
\end{defin}
\begin{defin}
    Given a simplicial complex $\K$ on $[m]$ and $J\subseteq [m]$, the \textit{full subcomplex} $\K_J$ is the simplicial complex on $J$ given by 
    \[\K_J:=\{\sigma\in\K:\sigma\subseteq J\}.\]
    \end{defin}
\begin{notation}
    For a subset $J\subseteq[m]$, we will adopt the notation $J^c:=[m]-J$.
\end{notation}
\begin{defin}
    The \textit{geometric realization} of a simplicial complex $\K$ on $m$ is the topological space $|\K|\subseteq \R^m$ given by
    \[|\K|:=\bigcup_{\sigma\in\K} \Delta_\sigma\]
    where $\Delta_\sigma$ is the convex hull of $\{e_j:j\in \sigma\}$ and $(e_j)_{j\in [m]}$ is the standard basis of $\R^m$.
\end{defin}

\begin{note}
    For a simplicial complex $\K$, we denote by $H_*(\K)$ the simplicial homology of its geometric realization $|\K|$ for some field $\F$ unless otherwise specified. Similarly, $H^*(\K)$ denotes the simplicial cohomology.
\end{note}
\begin{defin}
    The join of two simplicial complexes $\K, \L$ is defined to be the complex of pairs $(\sigma,\tau)$ where $\sigma\in \K$ and $\tau\in \L$.
\end{defin}
\begin{notation}
    For a fild $\F$, we will denote the Betti numbers of a complex $\K$ as $\beta_j(\K;\F):=\text{dim }H_j(\K;\F)$ and their sum as $\beta(\K;\F):=\sum\limits_{j\in\Z}\beta_j(\K;\F)$. We adapt the notation to $\Tilde{\beta}$ when taking the dimension of the reduced homology instead. We adapt the notation when talking about cohomology by turning the subscript into an upperscript.
\end{notation}
\begin{defin}
     We say a space $M$ is a topological $n$-manifold if it's locally homeomorphic to $\R^n$. We say a simplicial complex $\K$ is a \textit{triangulated manifold} if its geometric realization is homeomorphic to some closed (topological) manifold. 
\end{defin}
\subsection{Duality}

 \begin{defin}
     For a field $\F$, we say a closed $n$-manifold $M$ is $\F$-orientable if $H_n(M;\F)\neq 0$.
 \end{defin}
 
Topological manifolds satisfy several duality theorems, which makes their (co)homology particularly interesting to study. A proof of the following theorem can be found in Theorem 8.3 of \cite{bredon} by setting $L=\emptyset$. 
\begin{prop}[Poincaré-Alexander-Lefschetz duality]\label{pal}
     If $K$ is a compact locally contractible subspace of a closed $\F$-orientable $n$-manifold $M$ then there is a natural isomorphism \[H_i(M,M-K;\F)\to H^{n-i}(K;\F).\]
\end{prop}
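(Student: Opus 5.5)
The plan is to follow the classical route through Čech cohomology, which is how the statement is proved in \cite{bredon}. Since $M$ is a closed $\F$-orientable $n$-manifold, it has a fundamental class $[M]\in H_n(M;\F)$. For every compact $K\subseteq M$, the class $[M]$ restricts to an orientation class $\mu_K\in H_n(M,M-K;\F)$.

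\textbf{The duality map.} For each open neighbourhood $U\supseteq K$, excision identifies $H_n(M,M-K)$ with $H_n(U,U-K)$. Capping with $\mu_K$ then gives a map
\[D_U\colon H^{n-i}(U;\F)\to H_i(U,U-K;\F)\cong H_i(M,M-K;\F).\]
These maps are compatible with restriction between neighbourhoods. Passing to the direct limit yields
\[D_K\colon \check H^{n-i}(K;\F)=\varinjlim_{U\supseteq K}H^{n-i}(U;\F)\to H_i(M,M-K;\F).\]
Naturality in $K$ follows from naturality of the cap product. The map in the statement is then the inverse of $D_K$, composed with the comparison between Čech and singular cohomology.

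\textbf{Proving $D_K$ is an isomorphism.} I would use the usual bootstrapping argument.
\begin{enumerate}
\item First take $K$ to be a compact convex set inside a coordinate chart $\R^n\subseteq M$. Both sides are then computed directly: $\check H^*(K)$ is that of a point, and $H_*(\R^n,\R^n-K)$ is concentrated in degree $n$. The map $D_K$ sends $1$ to $\mu_K$, which is a generator because the orientation is local.
\item Next treat finite unions of such sets. For compact $A,B$, the Čech Mayer--Vietoris sequence (exact because Čech cohomology is continuous on compacta) maps via the $D$'s to the relative Mayer--Vietoris sequence for $(M,M-A)$ and $(M,M-B)$. Checking that the squares commute up to sign is the delicate bookkeeping. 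The five lemma then completes an induction on the number of convex pieces, since intersections of convex sets in one chart are convex.
\item Finally, pass to a general compact $K$. I would write $K=\bigcap_j K_j$ as a decreasing intersection of finite unions of convex compacta, for instance using a triangulation, or a cover by charts with the chart-wise sets arranged so that the inductive step applies. Both $\check H^*$ and $H_*(M,M-\,\cdot\,)$ convert such intersections into direct limits; for the homology side this is because any compact singular chain in $M-K$ lies in some $M-K_j$. Hence $D_K$ is a colimit of isomorphisms.
\end{enumerate}

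\textbf{Replacing Čech by singular cohomology.} For compact locally contractible $K$, Čech cohomology agrees with singular cohomology. The reason is that a compact locally contractible subset of a manifold is an ENR, so it is a retract of some neighbourhood $U$. Moreover, the neighbourhoods of this kind that deformation retract onto $K$ up to homotopy are cofinal, so $\varinjlim H^*(U)\cong H^*(K)$. I expect this step, together with the commutativity of the Mayer--Vietoris ladder, to be the main obstacle; for both I would simply invoke the corresponding statements in \cite{bredon}.
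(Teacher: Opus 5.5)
Your outline is the standard proof of the Čech-cohomology duality $\check H^{n-i}(K;\F)\cong H_i(M,M-K;\F)$ in \cite{bredon} (Theorem 8.3 with $L=\emptyset$), followed by the identification of Čech with singular cohomology for compact ENRs, so it is essentially the same approach as the paper, which simply cites that theorem. One point needs tightening, in step 3: step 2 only covers finite unions of convex sets lying in a \emph{single} chart, since pieces convex in different charts can meet in a non-convex or even disconnected set. You should therefore first treat arbitrary compacta inside one chart by decreasing intersections, and then handle a general $K$ as a finite union of such compacta by a second Mayer--Vietoris induction, as Bredon does.
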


To effectively use this in the world of abstract simplicial complexes, we will need the following lemma:
\begin{lem}\label{simp}
    Let $\K$ be a simplicial complex on $[m]$ and $J\subseteq [m]$. There is a strong deformation retraction $|\K|-|\K_J|\simeq |\K_{{J^c}}|$.
\end{lem}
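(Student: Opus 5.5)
We will write down an explicit straight-line homotopy on $|\K|-|\K_J|$, working in barycentric coordinates of the geometric realization. Every point $x\in|\K|$ can be written uniquely as $x=\sum_{i\in\sigma} t_i e_i$ with $\sigma\in\K$, $t_i>0$ and $\sum t_i=1$. Split this sum as $x=x_J+x_{J^c}$, where $x_J=\sum_{i\in\sigma\cap J}t_ie_i$ and $x_{J^c}=\sum_{i\in\sigma\cap J^c}t_ie_i$. Put $s(x)=\sum_{i\in\sigma\cap J^c}t_i$.

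The first step is to note that $x\notin|\K_J|$ holds exactly when $\sigma\not\subseteq J$. Indeed, $x$ lies in $|\K_J|$ iff its support $\sigma$ is a face of $\K_J$, i.e. iff $\sigma\subseteq J$, since supports are unique. So on $|\K|-|\K_J|$ we have $s(x)>0$, and we may define
\[r(x)=\frac{x_{J^c}}{s(x)}.\]
This point is supported on $\sigma\cap J^c$, which is a face of $\K$ contained in $J^c$, hence a face of $\K_{J^c}$. Therefore $r$ maps $|\K|-|\K_J|$ into $|\K_{J^c}|$.

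The second step is to define the homotopy
\[H(x,t)=(1-t)x+t\,r(x).\]
For each $t$, the point $H(x,t)$ lies in the simplex $\Delta_\sigma\subseteq|\K|$, because it is a convex combination of two points of $\Delta_\sigma$. Its $J^c$-coordinates are $(1-t+t/s)t_i$, which are still positive for $i\in\sigma\cap J^c$. Hence its support is not contained in $J$, and the homotopy stays inside $|\K|-|\K_J|$. We also check the boundary conditions:
\begin{itemize}
\item $H(\cdot,0)=\mathrm{id}$;
\item $H(\cdot,1)=r$;
\item if $x\in|\K_{J^c}|$, then $x_J=0$ and $s=1$, so $r(x)=x$ and $H(x,t)=x$ for all $t$.
\end{itemize}
So $H$ is a strong deformation retraction, once we know it is continuous.

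The main obstacle is continuity, since $r$ is defined simplex by simplex. We handle it as follows. In the ambient coordinates of $\R^m$, write $x=(x_1,\dots,x_m)$. Then $x_{J^c}$ is simply the coordinate projection $\pi_{J^c}(x)$, and $s(x)=\sum_{i\in J^c}x_i$. Both are restrictions of linear maps on $\R^m$, and $s$ is nonvanishing on the domain. Therefore $r=\pi_{J^c}/s$ and $H$ are restrictions of continuous maps on an open subset of $\R^m\times[0,1]$, which gives continuity. Because $\K$ is finite, $|\K|$ carries the subspace topology, so no issue about the weak topology arises.
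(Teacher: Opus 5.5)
Your proof is correct and uses exactly the paper's homotopy $H(x,t)=(1-t)x+t\,\frac{\sum_{j\in J^c}x_je_j}{\sum_{j\in J^c}x_j}$. The paper first treats the full simplex $\Delta_{[m]}$ and then notes that the homotopy keeps each point inside its carrier simplex, whereas you work directly on $|\K|$ via supports and explicitly check that the homotopy never enters $|\K_J|$ and is continuous, which are details the paper leaves implicit.
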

\begin{proof}
    Let $e_1,\ldots, e_m$ be the standard basis of $\R^m$. First we consider the case $\K=2^{[m]}$; consider the homotopy $H:(\Delta_{[m]}-\Delta_{J})\times I\to \Delta_{[m]}-\Delta_J$ given by 
    \[H(x,t)=(1-t)x+t\left(\frac{\sum\limits_{j\in {J^c}}x_je_j}{\sum\limits_{j\in {J^c}}x_j}\right)\]
    this is well defined as, since $x\notin\Delta_J$ there's some $i\in {J^c}$ such that $x\cdot e_i\neq 0$. This is clearly a deformation retraction into $\Delta_{\J}$ as $H(x,1)\in \Delta({J^c})$ and whenever $x\in \Delta ({J^c})$, the second summand in the formula is just $tx$ and so $H(x,t)=(1-t)x+tx=x$.\\

    Notice that whenever a point $x$ is in a simplex $\sigma$ that intersects $\Delta_{J^c}$, the homotopy slides it towards its maximal face contained in $\Delta_{J^c}$ while staying inside of $\sigma$. If we consider a simplicial complex $\K$ and $x\in \K-\K_J-\K_{J^c}$, then $x$ is as in the previous observation, meaning that $H$ is well defined when intersecting with $\K$, showing this is indeed a strong deformation retract.
\end{proof}

\begin{prop}\label{les}
    Let $\K$ be a triangulated closed $\F$-orientable $n$-manifold on $[m]$ and $J\subseteq [m]$, then there is a long exact sequence
    \[\begin{tikzcd}
	{H_q(\K_J)} & {H_q(\K)} & {H^{n-q}(\K_{J^c})} & {H_{q-1}(\K_J),}
	\arrow["{i_q(J)}", from=1-1, to=1-2]
	\arrow["{p_q(J)}", from=1-2, to=1-3]
	\arrow["{\delta_q(J)}", from=1-3, to=1-4]
\end{tikzcd}\]
    where $i_q(J)$ is the map induced by the inclusion $\K_J\xhookrightarrow{}\K$. Further, the sequence is natural with respect to inclusions $J\subseteq L\subseteq [m]$.
\end{prop}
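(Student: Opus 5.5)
We take the long exact sequence of the pair $(|\K|,|\K|-|\K_{J^c}|)$ in homology and rewrite its terms using Lemma \ref{simp} and Proposition \ref{pal}. Write $M=|\K|$; it is a closed $\F$-orientable $n$-manifold. Apply Lemma \ref{simp} with the roles of $J$ and $J^c$ exchanged. It gives a strong deformation retraction of $M-|\K_{J^c}|$ onto $|\K_J|$. Hence the inclusion $|\K_J|\hookrightarrow M-|\K_{J^c}|$ is a homotopy equivalence and induces isomorphisms $H_q(\K_J)\cong H_q(M-|\K_{J^c}|)$.

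The pair sequence reads
\[\cdots\to H_q(M-|\K_{J^c}|)\to H_q(M)\to H_q(M,M-|\K_{J^c}|)\to H_{q-1}(M-|\K_{J^c}|)\to\cdots\]
Its first map is induced by inclusion. Composed with the isomorphism above, it becomes the map $i_q(J)$ induced by $\K_J\hookrightarrow\K$.

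Next we identify the relative term. The subspace $K=|\K_{J^c}|$ is a finite simplicial complex, so it is compact and locally contractible. Proposition \ref{pal} then gives a natural isomorphism $H_q(M,M-K)\cong H^{n-q}(\K_{J^c})$. We take $p_q(J)$ and $\delta_q(J)$ to be the transported maps. Exactness is preserved because each term has only been replaced by an isomorphic one.

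It remains to prove naturality for $J\subseteq L$. Then $L^c\subseteq J^c$, so there are inclusions of pairs $(M,M-|\K_{J^c}|)\to(M,M-|\K_{L^c}|)$ and $M-|\K_{J^c}|\subseteq M-|\K_{L^c}|$. The homology long exact sequence of pairs is natural, so the sequence for $J$ maps to the sequence for $L$. On the relative terms, the duality isomorphism of Proposition \ref{pal} is natural, with respect to restriction $H^{n-q}(\K_{J^c})\to H^{n-q}(\K_{L^c})$. On the absolute terms we need the square formed by $\K_J\hookrightarrow\K_L$, the two deformation retractions, and $M-|\K_{J^c}|\subseteq M-|\K_{L^c}|$ to commute. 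This is immediate, since all maps there are inclusions and the identifications are the inclusion-induced isomorphisms.

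The main obstacle is pinning down the naturality of Proposition \ref{pal} with respect to shrinking $K$. The isomorphism is cap product with the fundamental class, taken in a colimit over neighbourhoods of $K$. Its naturality for an inclusion $K'\subseteq K$ is the standard statement in Bredon's Theorem 8.3, and we plan to cite it directly rather than reprove it.
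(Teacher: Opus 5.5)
Your proof is correct and follows the same route as the paper: you use the homology long exact sequence of a pair, Lemma \ref{simp} to replace the complement by $|\K_J|$, and Poincar\'e--Alexander--Lefschetz duality, natural under shrinking the compact set, for the relative term. The only cosmetic difference is that the paper starts from the pair $(\K,\K_J)$, whereas you work directly with the pair $(|\K|,|\K|-|\K_{J^c}|)$, which spares you the five-lemma step identifying the two relative groups.
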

\begin{proof}
    The exactness of this sequence follows from considering the long exact sequence of the pair $(\K,\K_J)$ and applying Proposition \ref{pal} and Lemma $\ref{simp}$. Naturality follows from the naturality of the long exact sequence of the pair and the naturality of the Poincaré-Alexander-Lefschetz duality.
    \end{proof}

\subsection{Tightness} In \cite{KunLutz}, the authors show the absolute curvature of an immersion of a manifold $M$ into an Euclidean space is minimal when the immersion is tight, that is, for every closed half space H of the Euclidean space, the inclusion $M\cap H\xhookrightarrow{}M$ is injective in homology.\\

Given a triangulated manifold $\K$ on $[m]$, we can consider the geometric realization included into $\R^m$; this gives us a combinatorial characterisation of tightness for simplicial complexes.
\begin{defin}\label{tightdef}
    For a field $\F$, a simplicial complex $\K$ is $\F$-tight if for every $J\subseteq [m]$ the inclusion $\K_J\subseteq \K$ induces an injective map $H_*(\K_J)\to H_*(\K)$. We say a complex is tight if it is $\F$-tight for some field $\F$.
\end{defin}

\begin{prop}
    A triangulation $\K$ of a space $X$ on $[m]$ is $\F$-tight if and only if the embedding $X\xrightarrow{\cong}|\K|\xhookrightarrow{} \R^m$ is $\F$-tight.
\end{prop}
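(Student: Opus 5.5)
The plan is to compare closed half-spaces of $\R^m$ with full subcomplexes, using the usual notion: an embedding $X\hookrightarrow\R^m$ is $\F$-tight when $H_*(X\cap H)\to H_*(X)$ is injective for every closed half-space $H$. The proof then reduces to two facts. Every full subcomplex $|\K_J|$ is, up to homotopy equivalence compatible with inclusion, of the form $|\K|\cap H$ for some closed half-space $H$. Conversely, every $|\K|\cap H$ is homotopy equivalent, again compatibly with inclusion, to some $|\K_J|$. Given both facts, injectivity of $H_*(|\K|\cap H)\to H_*(|\K|)$ for all $H$ is equivalent to injectivity of $H_*(\K_J)\to H_*(\K)$ for all $J$, which is Definition \ref{tightdef}.

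For the first direction, fix $J\subseteq[m]$ and take the linear functional $f(x)=\sum_{j\in J^c}x_j$ together with the half-space $H=\{f\le \varepsilon\}$ for a small $\varepsilon\in(0,1)$. On $|\K|$ we have $f=0$ exactly on $|\K_J|$. Within each simplex $\Delta_\sigma$, the set $\Delta_\sigma\cap H$ deformation retracts onto $\Delta_{\sigma\cap J}$ by pushing the $J^c$-coordinates to zero and renormalising. This is the same straight-line formula as in Lemma \ref{simp}, with the roles of $J$ and $J^c$ exchanged. The retractions agree on common faces, so they glue to a strong deformation retraction of $|\K|\cap H$ onto $|\K_J|$. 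The inclusion $|\K_J|\subseteq|\K|$ therefore factors as a homotopy equivalence followed by $|\K|\cap H\hookrightarrow|\K|$. Injectivity for $H$ thus implies injectivity for $J$.

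For the converse, let $H=\{f\le c\}$ with $f$ linear. I first dispose of the trivial cases: if $|\K|\cap H$ is empty, injectivity is automatic, and if $H$ contains $|\K|$, the map is the identity. In general, set $J=\{j: f(e_j)\le c\}$. Since $f$ is affine on each simplex $\Delta_\sigma$, the set $\Delta_\sigma\cap H$ is a convex polytope containing $\Delta_{\sigma\cap J}$. I retract it onto that face by a piecewise straight-line flow that moves each point away from the vertices in $\sigma\setminus J$, namely the ones where $f>c$. Concretely, I write the point in barycentric coordinates and decrease the $J^c$-weights linearly, then renormalise. This keeps $f\le c$, because $f$ then becomes a convex combination weighted more towards vertices with $f\le c$. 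The flow is compatible on faces. The homotopy is always defined: for a point of $\Delta_\sigma\cap H$ with $\sigma\cap J=\emptyset$, every vertex has $f>c$, so $f>c$ at the point as well, and hence such simplices contribute nothing. This gives $|\K|\cap H\simeq|\K_J|$ compatibly with the inclusion into $|\K|$, so tightness of $\K$ implies tightness of the embedding.

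The main obstacle is making the converse retraction rigorous, in particular checking that the homotopy stays inside $H$ at every time $t$ and is continuous across faces. My approach is the formula $H(x,t)=\big(\sum_{J}x_je_j+(1-t)\sum_{J^c}x_je_j\big)/\big(\sum_J x_j+(1-t)\sum_{J^c}x_j\big)$. Then $f(H(x,t))$ is a weighted average whose weights shift monotonically toward $J$-vertices. This average is a monotone function of $t$ that starts at $f(x)\le c$ and ends at an average of values $\le c$, so it stays $\le c$ throughout. The denominator is positive whenever $\sum_J x_j>0$, which holds for all points of $|\K|\cap H$.
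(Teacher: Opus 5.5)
Your proof is correct and follows the paper's approach: for a half-space $H$ you set $J=\{j: e_j\in H\}$ and deformation retract $|\K|\cap H$ onto $|\K_J|$ by a homotopy in the style of Lemma~\ref{simp}. Your renormalised-weight formula is a reparametrisation of the straight segment from $x$ to its projection, so staying inside $H$ also follows directly from convexity. You also spell out the other direction, realising each $\K_J$ through the half-space $\{\sum_{j\in J^c}x_j\le\varepsilon\}$, which the paper's proof leaves implicit.
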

\begin{proof}
    Let $e_1,\ldots, e_m$ be the standard basis of $\R^m$. For each half plane $h$ of $\R^m$ define $J_h:=\{j\in [m]:e_j\in h\}$; there's a deformation retraction $|\K|\cap h\simeq |\K_{J_h}|$ constructed analogously to the one we used in the proof of Lemma \ref{simp} by projecting to $J_h$ instead of $J^c$.      
\end{proof}

One interest in tight triangulations of manifolds arises from the well-known Kühnel-Lutz conjecture; we refer the reader to \cite{KunLutz} for more details:

\begin{conj}
    The triangulation of an n-manifold is tight if and only if it's strongly minimal.
\end{conj}
Where by \textit{strongly minimal} we mean its number of faces is minimal in every dimension. This conjecture was shown to be true for $n\leq 3$ in \cite{BagDatSpreer}.\\

Another reason for our interest in tight complexes is that it was recently shown in \cite{iriye2023tightcomplexesgolod} that the class of tight manifold triangulation is precisely the class of manifolds $\K$ such that the Massey products of $\ZZ_\K$ are all trivial.

\subsection{Moment-Angle Complexes}

\begin{construction}
    Consider a simplicial complex $\K$ on $[m]$, for every $J\subseteq [m]$ consider the following topological space
    \[(D^2,S^1)^J:=\prod_{j\in J} Y_{J;j}\subseteq (D^2)^m\hspace{1cm}\text{where}\hspace{1cm} Y_{J;j}=\left\{\begin{array}{cl}
        D^2 & \text{if }j\in J \\
         S^1& \text{else.} 
    \end{array}\right.\]
    The moment-angle complex is then defined as $\ZZ_\K:=\bigcup\limits_{\sigma\in\K}(D^2,S^1)^\sigma$. We refer the reader to \cite[\S4]{ToricTop} for more details.
\end{construction}

\begin{defin}
    For a simplicial complex $\K$ on $[m]$, we define the \textit{Stanley-Reisner} ring of $\K$ as $\F[\K]:=\F[v_1,\ldots, v_m]/I_\K$ where $I_\K:=\left\langle{\prod\limits_{i\in \sigma}v_i:\sigma\notin\K}\right\rangle$
\end{defin}

\begin{prop}[{\cite{Hochster}}{\cite[\S3.2]{ToricTop}}]\label{Hochster}
     Let $\K$ be a simplicial complex on $[m]$ and $R$ a commutative ring. There are isomorphisms of bigraded algebras
     \begin{align*}
         H^* (\Lambda[u_1,\ldots,u_m]\otimes \F[\K],d)
         &\cong\bigoplus_{J\subseteq [m]}\H^*(\K_J)
     \end{align*}
     where bideg $u_i=(-1,2)$, bideg $v_i=(0,2)$, and the differential $d$ is given by $du_i=v_i$ and $dv_i=0$, and $[f]\in \H^{q-1}(\K_J)$ has bidegree $(q-|J|,2|J|)$, with the multiplication of the latter being 
     
    \[\mu_{J,L}:\H^*(\K_{J})\otimes H^*(\K_L)\xrightarrow[]{\cong}\H^*(\K_J*\K_L)\to \H^*(\K_{J\sqcup L})\] for disjoint subsets $J,L$, where the last map is induced by the inclusion  $\K_{J\sqcup L}\xhookrightarrow{}\K_J*\K_L$.
 \end{prop}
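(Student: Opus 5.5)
The plan is to exploit the fine $\Z^m$-multigrading of the Koszul-type complex $(\Lambda[u_1,\ldots,u_m]\otimes\F[\K],d)$. Give $u_i$ and $v_i$ multidegree $e_i\in\Z^m$ (together with the homological grading from the bidegree). The differential $du_i=v_i$ preserves multidegree, and the ideal $I_\K$ is monomial. Hence the complex splits as a direct sum of subcomplexes $C_{\mathbf a}$, one for each $\mathbf a\in\Z_{\ge 0}^m$, and its cohomology splits accordingly. A basis of $C_{\mathbf a}$ is given by the monomials $u_\sigma v^{\mathbf b}$ with $\sigma\subseteq[m]$, $\mathbf b+\chi_\sigma=\mathbf a$, and $\operatorname{supp}(\mathbf b)\in\K$. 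The argument then has three steps.

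\textbf{Step 1: non-squarefree multidegrees are acyclic.} Suppose some coordinate $a_i\ge 2$. Then every basis monomial of $C_{\mathbf a}$ contains $v_i$, since $u_i^2=0$. I would define a contracting homotopy $s$ that replaces one factor $v_i$ by $u_i$ when $u_i$ is absent, and is zero otherwise. A direct check gives $ds+sd=\mathrm{id}$. This works because $v_i$ still divides the monomial after the exchange, so the support condition $\operatorname{supp}(\mathbf b)\in\K$ is preserved. This is the standard ``Koszul complex in one variable is exact away from degree $\le 1$'' argument, tensored with the remaining variables. The only delicate point is to phrase it correctly in the presence of the quotient by $I_\K$.

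\textbf{Step 2: squarefree multidegrees give reduced cochains of full subcomplexes.} For $\mathbf a=\chi_J$, a basis monomial is $u_{J\setminus\tau}v_\tau$ with $\tau\subseteq J$ and $\tau\in\K$, that is, $\tau\in\K_J$, including $\tau=\emptyset$. Send $u_{J\setminus\tau}v_\tau$ to the dual cochain $\tau^*\in\tilde C^{|\tau|-1}(\K_J)$, corrected by a sign $\varepsilon(\tau,J)$ determined by the ordering of $[m]$. Applying $d$ to $u_{J\setminus\tau}v_\tau$ turns one $u_j$ with $j\in J\setminus\tau$ into $v_j$. The term survives exactly when $\tau\cup j\in\K$, so $d$ is carried to the simplicial coboundary on the augmented cochain complex. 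This yields the isomorphism $H(C_{\chi_J})\cong\H^*(\K_J)$. On degrees, $\H^{q-1}$ comes from $|\tau|=q$ and hence from $|J|-q$ exterior generators. This gives bidegree $(-(|J|-q),2|J|)=(q-|J|,2|J|)$, as claimed. Summing over $J$ gives the additive statement. The hypothesis on the coefficient ring $R$ plays no role beyond working over $\F$.

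\textbf{Step 3: the multiplication.} The product of the classes of $u_{J\setminus\sigma}v_\sigma$ and $u_{L\setminus\tau}v_\tau$ with $J\cap L=\emptyset$ is $\pm u_{(J\sqcup L)\setminus(\sigma\sqcup\tau)}v_{\sigma\sqcup\tau}$. This is zero if $\sigma\sqcup\tau\notin\K$. On the cochain side this is exactly the cochain-level map $\tilde C^*(\K_J)\otimes\tilde C^*(\K_L)\cong\tilde C^{*+1}(\K_J*\K_L)\to\tilde C^{*+1}(\K_{J\sqcup L})$: the join isomorphism sends $\sigma^*\otimes\tau^*$ to $(\sigma\sqcup\tau)^*$, and this is followed by restriction along $\K_{J\sqcup L}\hookrightarrow\K_J*\K_L$. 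Hence the algebra structure is $\mu_{J,L}$, and products between non-disjoint $J,L$ vanish since their multidegree is not squarefree.

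I expect the main obstacle to be the sign bookkeeping in Steps 2 and 3. One must choose $\varepsilon(\tau,J)$ so that $d$ matches the coboundary exactly and the product matches the join isomorphism up to the standard Koszul sign; the choice is the shuffle sign of $(J\setminus\tau,\tau)$ in the ordering of $[m]$. Once that is fixed, everything else is a routine verification.
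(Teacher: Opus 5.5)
Your proposal is correct. The paper gives no proof of its own and only cites Hochster and Buchstaber--Panov, \S3.2, and your argument is that standard one: the multigraded splitting, the identification $u_{J\setminus\tau}v_\tau\mapsto\varepsilon(\tau,J)\tau^*$ of the squarefree parts with the augmented cochains $\tilde C^{*-1}(\K_J)$, and the join description of products (Baskakov's theorem). The only difference is packaging: Buchstaber--Panov show that the projection onto the quotient algebra $\Lambda[u_1,\ldots,u_m]\otimes\F[\K]/(v_i^2,u_iv_i)$ is a quasi-isomorphism, so products with $J\cap L\neq\emptyset$ vanish already at cochain level, whereas you kill the non-squarefree multidegrees, which span exactly the kernel of that projection, directly with your one-variable homotopy.
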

 \begin{prop}[\cite{ToricTop}]
        Let $\K$ be a simplicial complex on $[m]$. Then, for a commutative ring $R$, there's an algebra isomorphism
        \[H^*(\ZZ_\K)\cong H^*(\Lambda[u_1,\ldots,u_m]\otimes \F[\K])\]
 \end{prop}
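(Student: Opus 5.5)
The plan is to follow the standard cellular argument of Buchstaber and Panov. It passes through an intermediate finite-dimensional differential graded algebra sitting between the cellular cochains of $\ZZ_\K$ and the Koszul-type algebra $\Lambda[u_1,\ldots,u_m]\otimes\F[\K]$. Cellular cochains are the natural computational tool here because $\ZZ_\K$ is visibly a subcomplex of a product cell structure on $(D^2)^m$.

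\textbf{Cell structure and cellular cochains.} First I would give $D^2$ the CW structure with one $0$-cell $1$, one $1$-cell $T$ (so that $S^1 = 1\cup T$), and one $2$-cell $D$. The product cell structure on $(D^2)^m$ then has cells indexed by pairs of disjoint subsets $(\sigma,\omega)$ of $[m]$: the factor is $D$ for coordinates in $\sigma$, $T$ for coordinates in $\omega$, and $1$ elsewhere. Such a cell lies in $\ZZ_\K$ exactly when $\sigma\in\K$, so $\ZZ_\K$ is a cellular subcomplex.

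Dually, the cellular cochain complex $C^*(\ZZ_\K)$ is identified with the quotient
\[R^*(\K):=\Lambda[u_1,\ldots,u_m]\otimes\F[\K]\big/\langle v_i^2,\ u_iv_i : i\in[m]\rangle .\]
Under this identification $u_i$ corresponds to $T$ and $v_i$ to $D$ in the $i$-th coordinate. The cellular coboundary is $du_i=v_i$, since $\partial D = T$ up to sign and $T$ is a cycle rel basepoint, and $dv_i=0$. The bigrading matches after assigning $\mathrm{bideg}\,u_i=(-1,2)$ and $\mathrm{bideg}\,v_i=(0,2)$.

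\textbf{Additive isomorphism.} Next I would show that the quotient map $\Lambda[u]\otimes\F[\K]\to R^*(\K)$ is a quasi-isomorphism. Both sides are $\Z^m$-multigraded and the differential preserves the multigrading. The kernel is spanned by monomials in which some $v_i$ appears with exponent at least $2$, or with exponent $1$ together with $u_i$.

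On each multidegree I would write down an explicit contracting homotopy for the kernel, coordinate by coordinate, using $h(u_iv_i^{k})=0$ and $h(v_i^{k+1})=u_iv_i^{k}$ for $k\ge1$. This is the usual acyclicity of $\Lambda[u_i]\otimes\F[v_i]$ truncated below degree $v_i^2$. It works because the Stanley--Reisner relations only depend on the supports of monomials, so the kernel splits as a tensor product over coordinates inside each face $\sigma\in\K$. Passing to cohomology then gives the additive isomorphism.

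\textbf{Multiplicativity (the main obstacle).} The cellular cochain algebra is not given a product directly, because the diagonal of $\ZZ_\K$ is not cellular. My plan is to use the cellular approximation $\tilde\Delta:D^2\to D^2\times D^2$ that is homotopic to the diagonal and realises the multiplication $u\cdot v = 0$, $v\cdot v=0$, $u\cdot u = 0$ on $C^*(D^2)$ with $1$ as unit. For this one can take the map used in \cite[\S4]{ToricTop}, which is defined on $(D^2,S^1)$ so that it restricts to $S^1$.

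Its $m$-fold product restricts to $\ZZ_\K\to\ZZ_\K\times\ZZ_\K$, since it preserves each $(D^2,S^1)^\sigma$. It therefore induces the quotient ring structure on $R^*(\K)$ at the cochain level. Since it is homotopic to the diagonal, the induced product on cohomology is the cup product, and the quotient map above is a map of differential graded algebras. This gives the isomorphism of algebras $H^*(\ZZ_\K)\cong H^*(\Lambda[u_1,\ldots,u_m]\otimes\F[\K])$.

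Checking that the cellular diagonal approximation genuinely respects the subcomplex $\ZZ_\K$ and gives exactly the stated relations is where I expect the real care to be needed.
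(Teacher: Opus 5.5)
Your proposal is correct and is essentially the Buchstaber--Panov argument from \cite{ToricTop}: identify the cellular cochains of $\ZZ_\K$ with $R^*(\K)$, show the quotient map from $\Lambda[u_1,\ldots,u_m]\otimes\F[\K]$ is a quasi-isomorphism via the support-preserving homotopy, and get multiplicativity from a cellular diagonal approximation; the paper relies on exactly this, as it gives no proof of its own beyond the citation. The one point to state explicitly is that the homotopy from the diagonal approximation to the diagonal must be a homotopy of pairs $(D^2,S^1)\to(D^2\times D^2,S^1\times S^1)$, so that its $m$-fold product stays inside $\ZZ_\K\times\ZZ_\K$ and the induced product really is the cup product of $\ZZ_\K$.
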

 \noindent This induces a bigrading on $H^*(\ZZ_\K)$, in particular 
\[H^p(\K_J)\cong \bigoplus_{-k+2l=p}H^{-k,2l}(\ZZ_\K)\;\;\;\text{where }\;\;H^{-k,2l}(\ZZ_\K)\cong \bigoplus_{\substack{J\subseteq [m]\\|J|=l}}\H^{l-k-1}(\K_J).\]

There's a dual decomposition in homology by taking reduced homology of the full subcomplexes instead. We refer to both of these as the \textit{Hochster's decomposition} of $\ZZ_\K$.
\subsection{Poset cohomology}

\begin{notation}
    Given a poset $(P,\leq)$, we regard $P$ as a category with a unique map $x\to y$ whenever $x\leq y$. We denote by $2^{[m]}$ the poset of subsets of $[m]$ with the order induced by set inclusion.
\end{notation}

\begin{construction}
Consider the poset category $2^{[m]}$ for some integer $m$. Let $\mathcal{A}$ be any abelian category and $F:2^{[m]}\to \A$ a functor, we can construct a cochain complex $C^*(F)$ as follows:
    \begin{itemize}
        \item  $C^l(F)=\bigoplus\limits_{\substack{J\subseteq[m]\\|J|=l}}F(J)$
        \item $d^l:C^l(F)\to C^{l+1}(F)$ is given by \[d^l=\sum\limits_{\substack{J\subseteq[m]\\|J|=l}}\sum\limits_{x\notin J}{\varepsilon(J,x)}F(J\to J\cup\{x\})\] where $\varepsilon(J,x)=(-1)^{|\{j\in J: \; j<x\}|}$.
        \item For a natural transformation $\eta:F\to G$, we set $C^*(\eta)$ to be induced by the $\eta(J)$ for each $J\subseteq[m]$.
    \end{itemize}
    This lets us define the cohomology of such functors as $H^l(F):=H^l(C^*(F))$.
\end{construction}
\begin{exm}
    Let $\K$ be a simplicial complex and $F_\K$ be the functor defined as follows:
    \begin{itemize}
        \item $F_\K(\sigma)=\left\{\begin{array}{cl}
        \Z f_\sigma &  \text{if $\sigma\in\K$}\\
        0 & \text{else.}
    \end{array}\right.$
        \item Whenever $\sigma\in \K$ and $x\in \sigma$ \[F(\sigma\setminus\{x\}\subseteq\sigma)(f_\tau)=\left\{\begin{array}{cl}
            f_\sigma & \text{if }\tau=\sigma\setminus\{x\} \\
            0 & \text{else.}
        \end{array}\right.\]
    \end{itemize}
    This gives us the reduced simplicial cochain complex of $\K$ with a shift: ~$C^*(F)\cong\Tilde{C}^{*-1}(\K;\Z)$.
\end{exm}
\begin{prop}[{\cite[Prop. 2.7]{ruiz2025doubleuberposethomology}}]
   Let $m$ be an integer and $\A$ an abelian category. Let us denote by $dg\A$ the category of chain complexes on $\A$. The construction above defines an exact functor $C^*:\text{Fun}\left(2^{[m]},\A\right)\to dg\A$. 
\end{prop}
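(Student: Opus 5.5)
The plan is to verify three things directly from the definition: that $C^*(F)$ is a cochain complex, that $C^*$ is a functor, and that it preserves exactness. Everything except the sign check in the first step is componentwise bookkeeping.

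\emph{The complex.} First we show $d^{l+1}\circ d^l=0$. Restrict to the summand $F(J)$ with $|J|=l$ and project onto $F(J\cup\{x,y\})$ for distinct $x,y\notin J$. The composite $d^{l+1}d^l$ has exactly two contributions there, from the two paths $J\to J\cup\{x\}\to J\cup\{x,y\}$ and $J\to J\cup\{y\}\to J\cup\{x,y\}$. Because $F$ is a functor on a poset, both paths induce the same morphism $F(J\to J\cup\{x,y\})$. It therefore suffices to check
\[\varepsilon(J,x)\,\varepsilon(J\cup\{x\},y)=-\,\varepsilon(J,y)\,\varepsilon(J\cup\{y\},x).\]
Assume without loss of generality that $x<y$. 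Then $\varepsilon(J\cup\{x\},y)=-\varepsilon(J,y)$, since $x$ is one more element below $y$. Also $\varepsilon(J\cup\{y\},x)=\varepsilon(J,x)$, since $y$ is not below $x$. This gives the required sign cancellation. I expect this sign bookkeeping to be the only real point of care in the proof.

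\emph{Functoriality.} For a natural transformation $\eta:F\to G$, the map $C^l(\eta)=\bigoplus_{|J|=l}\eta(J)$ commutes with $d$. Each component of $d$ is $\pm F(J\to J\cup\{x\})$, and naturality gives $\eta(J\cup\{x\})\circ F(J\to J\cup\{x\})=G(J\to J\cup\{x\})\circ\eta(J)$. The signs $\varepsilon(J,x)$ are the same for $F$ and $G$, so $C^*(\eta)$ is a chain map. Compatibility with identities and composition holds because both are defined componentwise on each $J$. The construction is also visibly additive, since it is built from finite direct sums.

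\emph{Exactness.} In $\mathrm{Fun}(2^{[m]},\A)$, kernels and cokernels are computed objectwise. Hence a sequence $0\to F\to G\to H\to 0$ is exact if and only if $0\to F(J)\to G(J)\to H(J)\to 0$ is exact for every $J\subseteq[m]$. In each degree $l$, the sequence $0\to C^l(F)\to C^l(G)\to C^l(H)\to 0$ is the finite direct sum over $|J|=l$ of these sequences. A finite direct sum of short exact sequences in an abelian category is short exact, because finite direct sums are biproducts and so exact. Finally, exactness in $dg\A$ is checked degreewise, so $C^*$ is exact.
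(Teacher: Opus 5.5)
Your proposal is correct: the sign identity $\varepsilon(J,x)\,\varepsilon(J\cup\{x\},y)=-\varepsilon(J,y)\,\varepsilon(J\cup\{y\},x)$ for $x<y$ gives $d^2=0$, and naturality gives the chain-map property. Exactness then reduces to exactness of finite direct sums of short exact sequences, because kernels and cokernels in $\text{Fun}(2^{[m]},\A)$ and in $dg\A$ are computed objectwise and degreewise. The paper gives no proof of its own and only cites \cite[Prop. 2.7]{ruiz2025doubleuberposethomology}, so your direct verification supplies the argument the paper omits.
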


This construction provides a generalization for double homology, a new invariant for $\K$ obtained by endowing $\H_*(\ZZ_\K)$ with a differential, giving us a bigraded chain complex $CH_{*,*}$. The homology of this complex is denoted as $DH_{*,*}(\ZZ_\K)$ (originally denoted as $HH_{*,*}(\ZZ_\K)$). One can define \textit{double cohomology} in a dual manner, we denote the cochain by $CH^{*,*}(\ZZ_\K)$ and the double cohomology by $DH^{*,*}(\ZZ_\K)$. We refer the reader to \cite{Limonchenko_2023} for the explicit construction and more details.
\begin{notation}
    Given a simplicial complex $\K$, we denote the functor $\K_-$ from the category $2^{[m]}$ to the category of finite simplicial complexes, mapping each subset $J$ to the complex $\K_J$.
\end{notation}
\begin{exm}
    For $k\in \Z$, $CH_{-k,2l}(\ZZ_\K)=C^l(\H_{l-k-1}(\K_-))$ up to a sign in the differential.
\end{exm}
\begin{exm}
    For $k\in \Z$, $CH^{-k,2l}(\ZZ_K)=C^{m-l}(\H^{l-k-1}(\K_{-^c}))$ up to a sign in the differential.
\end{exm}
A proof of the following proposition can be found in \cite[\S3.4]{jonespoly} as Proposition 4 in the context of commuting cubes:
\begin{prop}
    Let $F:2^{[m]}\to \A$. If there is $x\in[m]$ such that for every $J\subseteq [m]\setminus\{x\}$, $F(J\subseteq J\cup\{x\})$ is an isomorphism then $H^*(F)=0$.
\end{prop}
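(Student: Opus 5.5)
The plan is to build a contracting homotopy on the cochain complex $C^*(F)$ out of the inverses of the isomorphisms in the direction $x$. Equivalently, I will show that $C^*(F)$ is the mapping cone of an isomorphism of complexes.

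\textbf{Splitting the complex.} Write $P=2^{[m]\setminus\{x\}}$. Every subset of $[m]$ is either some $J\in P$ or $J\cup\{x\}$, so $C^*(F)$ decomposes as a graded object into
\[A^l=\bigoplus_{J\in P,\ |J|=l}F(J),\qquad B^l=\bigoplus_{J\in P,\ |J|=l-1}F(J\cup\{x\}).\]
Let $F_0,F_1:P\to\A$ be the functors $F_0(J)=F(J)$ and $F_1(J)=F(J\cup\{x\})$. These are functors on a cube of dimension $m-1$; after relabelling $[m]\setminus\{x\}$ order-preservingly we may use the same construction $C^*$ for them.

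\textbf{Components of the differential.} The differential of $C^*(F)$ has three kinds of components.
\begin{itemize}
\item Components $A\to A$ add an element $y\neq x$. The sign $\varepsilon(J,y)$ agrees with the sign computed inside $[m]\setminus\{x\}$, since $x\notin J$. So this part is $d_{F_0}$.
\item Components $B\to B$ send $J\cup\{x\}$ to $J\cup\{x,y\}$ with sign $\varepsilon(J\cup\{x\},y)$. This equals $\varepsilon(J,y)$ times $-1$ exactly when $x<y$.
\item Components $A\to B$ are $\varepsilon(J,x)\,F(J\subseteq J\cup\{x\})$.
\end{itemize}

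\textbf{Fixing the signs.} To remove the discrepancy in the $B\to B$ part, rescale $F(J\cup\{x\})$ by $\pm1$ via the automorphism $\theta_J=\varepsilon(J,x)$. A check shows that $\varepsilon(J\cup\{x\},y)\,\varepsilon(J,x)=-\varepsilon(J,y)\,\varepsilon(J\cup\{y\},x)$. This is the usual commutation relation of signs in the cube, and it is the one place where bookkeeping is needed.

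After this rescaling, $C^*(F)$ is isomorphic to the mapping cone of the chain map $\phi:C^*(F_0)\to C^*(F_1)$ induced by the natural transformation $F_0\Rightarrow F_1$ with components $F(J\subseteq J\cup\{x\})$, up to the standard sign convention on the cone's $B$-part. Naturality of this transformation is immediate from functoriality of $F$, because the squares $J\subseteq J\cup\{y\}$ and $J\subseteq J\cup\{x\}$ commute in $2^{[m]}$.

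\textbf{Conclusion.} By hypothesis each component of $F_0\Rightarrow F_1$ is an isomorphism, so $\phi$ is an isomorphism of cochain complexes; functoriality of $C^*$ guarantees that $C^*$ of a natural isomorphism is an isomorphism. The cone of an isomorphism is acyclic, either by the long exact sequence of the cone or directly via the contracting homotopy $h=\phi^{-1}:B\to A$. Hence $H^*(F)=H^*(\mathrm{Cone}(\phi))=0$.

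The main obstacle is the sign verification in the second step. If the twist is awkward to pin down, I would instead verify directly that $h$ defined by $h|_{F(J\cup\{x\})}=\varepsilon(J,x)\,F(J\subseteq J\cup\{x\})^{-1}$ and $h|_A=0$ satisfies $dh+hd=\mathrm{id}$. Checking this reduces to the same anticommutation identity for $\varepsilon$ together with naturality.
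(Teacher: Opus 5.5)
Your proposal is correct, and it is essentially the argument behind the result the paper cites in place of a proof (Khovanov's Proposition~4): split the cube along the direction $x$ and identify $C^*(F)$, after the sign twist $\theta_J=\varepsilon(J,x)$, with the cone of the isomorphism $C^*(F_0)\to C^*(F_1)$ induced by the maps $F(J\subseteq J\cup\{x\})$. Your identity $\varepsilon(J\cup\{x\},y)\,\varepsilon(J,x)=-\varepsilon(J,y)\,\varepsilon(J\cup\{y\},x)$ does hold, since both sides equal $\varepsilon(J,x)\varepsilon(J,y)$ times $-1$ exactly when $x<y$, and with it and naturality your explicit homotopy $h$ satisfies $dh+hd=\mathrm{id}$.
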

A simple example of such a functor is the diagonal functor $\Delta(X):2^{[m]}\to \A$ for $X\in \A$ mapping every subset to $X$ and every map to the identity.
\begin{cor}\label{acyclic}
    For any $X\in\A$, the diagonal functor $\Delta(X)$ is acyclic.
\end{cor}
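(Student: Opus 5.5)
The statement is Corollary \ref{acyclic}, and the plan is to read it off from the preceding proposition, with a direct argument as a backup. Take $X\in\A$ and let $\Delta(X):2^{[m]}\to\A$ be the diagonal functor. Every morphism $J\subseteq J'$ is sent to $\mathrm{id}_X$. So for any $x\in[m]$ (say $x=m$, assuming $m\geq 1$) and any $J\subseteq[m]\setminus\{x\}$, the map $\Delta(X)(J\subseteq J\cup\{x\})=\mathrm{id}_X$ is an isomorphism. The hypothesis of the preceding proposition then holds, and it gives $H^*(\Delta(X))=0$ directly. The only caveat is $m=0$: there $C^*(\Delta(X))=X$ sits in degree $0$ and is not acyclic, so I take $m\geq1$ as implicit.

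As a sanity check, or if one wants to avoid citing the commuting-cube result, I would compute $C^*(\Delta(X))$ directly.

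\begin{itemize}
\item In degree $l$ it is $X^{\binom{m}{l}}$, indexed by subsets $J$ with $|J|=l$, and the differential is $\sum_{x\notin J}\varepsilon(J,x)\,\mathrm{id}$.
\item This is exactly $X\otimes_{\Z}$ of the augmented simplicial cochain complex of the full simplex $\Delta^{[m]}$, shifted by one, as in the example with $F_\K$ for $\K=2^{[m]}$.
\item Since the full simplex is contractible, its reduced cohomology vanishes.
\end{itemize}

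To avoid tensoring with an arbitrary abelian category, I would instead write down the contracting homotopy $h:C^{l}\to C^{l-1}$ that sends the $J$-component to the $J\setminus\{m\}$-component with sign $\varepsilon(J\setminus\{m\},m)$ when $m\in J$, and to $0$ otherwise. Then I would check that $dh+hd=\mathrm{id}$.

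The main obstacle is only the sign bookkeeping in $dh+hd$. Using $\varepsilon(J,m)=(-1)^{|J|}$, the terms indexed by $x\neq m$ cancel pairwise. This happens because $\varepsilon(J,x)\varepsilon(J\cup\{x\},m)=-\varepsilon(J,m)\varepsilon(J\cup\{m\},x)$, which holds since $m$ is the largest element. The remaining terms give the identity. Nevertheless, the cleanest proof is the one-line appeal to the preceding proposition.
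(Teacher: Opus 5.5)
Your proposal is correct and is the paper's argument: the paper deduces the corollary from the preceding proposition, since every structure map of $\Delta(X)$ is the identity. Your caveat that $m\geq 1$ is needed (for $m=0$ the complex is just $X$ in degree $0$) is valid and left implicit in the paper, and your explicit contracting homotopy with its sign check is also correct as an independent verification.
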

\section{A characterisation for tightness}
In this section we prove our first theorem: a tight lower bound for the total rank of the moment-angle complex associated to a manifold which is achieved when the triangulation is tight.

\begin{lem}\label{main}
    Let $\F$ be a field and let $\K$ be a triangulated closed $\F$-orientable $n$-manifold on $[m]$. For every $J\subseteq [m]$ and $q\in \Z$ we have that 
    \[\beta_q(\K;\F)\leq \beta_q(\K_{J};\F)+ \beta_{n-q}(\K_{{J^c}};\F)\]
    with equality for every $J\subseteq [m]$ and for every $q\in\Z$ if and only if $\K$ is $\F$-tight. 
\end{lem}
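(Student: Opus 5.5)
The plan is to read the inequality directly off the long exact sequence of Proposition \ref{les}. Fix $J\subseteq[m]$ and $q$, and look at the portion
\[H_q(\K_J)\xrightarrow{i_q(J)}H_q(\K)\xrightarrow{p_q(J)}H^{n-q}(\K_{J^c}).\]
Exactness at $H_q(\K)$ gives $\ker p_q(J)=\im i_q(J)$. By rank--nullity,
\[\beta_q(\K)=\dim\im i_q(J)+\dim\im p_q(J)\leq \beta_q(\K_J)+\beta^{n-q}(\K_{J^c}).\]
Over a field, $\beta^{n-q}(\K_{J^c})=\beta_{n-q}(\K_{J^c})$ by universal coefficients; all complexes here are finite. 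This proves the inequality.

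For the equality statement, note that equality holds at a given $(J,q)$ exactly when both $i_q(J)$ and $p_q(J)$ are surjective onto their images at full rank. That is, $i_q(J)$ must be injective and $p_q(J)$ must be surjective.

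If $\K$ is $\F$-tight, then every $i_q(J)$ is injective by definition. We still need $p_q(J)$ surjective, which is the same as $\delta_q(J)=0$. By exactness, $\delta_q(J)=0$ holds exactly when $i_{q-1}(J)$ is injective, since $\im\delta_q(J)=\ker i_{q-1}(J)$. Tightness gives injectivity of $i_{q-1}(J)$ too, so $\delta_q(J)=0$ and equality holds for every $J$ and $q$.

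Conversely, suppose equality holds for all $J$ and $q$. Then each $i_q(J)$ must be injective: otherwise $\dim\im i_q(J)<\beta_q(\K_J)$, and since $\dim\im p_q(J)\le\beta_{n-q}(\K_{J^c})$ the inequality would be strict. So every inclusion $\K_J\subseteq\K$ is injective in homology, and $\K$ is $\F$-tight.

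The only delicate point is the degree bookkeeping in Proposition \ref{les}, in particular the edge cases. When $J=\emptyset$ or $J^c=\emptyset$, the complexes $\K_\emptyset$ are empty. The sequence is still the long exact sequence of the pair $(\K,\K_J)$, with $H_*(\K,\emptyset)=H_*(\K)$ and the duality $H_q(\K,\K_J)\cong H^{n-q}(\K_{J^c})$ via Lemma \ref{simp}. So $H_q(\emptyset)=0$ and $H^{n-q}(\emptyset)=0$, and the argument above goes through unchanged. Apart from checking this, and the use of $\beta^j=\beta_j$ over a field, the proof is a direct rank count.
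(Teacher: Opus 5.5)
Your proof is correct and is essentially the paper's argument. Both apply rank--nullity to the long exact sequence of Proposition \ref{les}, and both identify the defect $\beta_q(\K_J)+\beta_{n-q}(\K_{J^c})-\beta_q(\K)$ with $\dim\ker i_q(J)+\dim\ker i_{q-1}(J)$ via $\im\delta_q(J)=\ker i_{q-1}(J)$. Your bookkeeping is cleaner than the paper's displayed chain, which writes $\rank i_q$ where $\rank\delta_q(J)=\dim\ker i_{q-1}(J)$ is meant.
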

\begin{proof}
    From Proposition \ref{les} we have that 
    \begin{align*}
        \beta_q(\K;\F)&=\text{null }p_q(J) +\rank p_q(J)\\
                   &=\rank i_q(J)+\text{null } \delta_q(J)\\
                   &=\left(\beta_q(\K_J)-\text{null }i_q(J)\right)+\left(\beta_{n-q}(\K_\J)-\rank i_q\right)\\
                   &=\beta_q(\K_J;\F)+\beta_{n-q}(\K_\J;\F)-(\text{null }i_q(J)+\text{null }i_{q-1}(J))\\
                   &\leq \beta_q(\K_J;\F)+\beta_{n-q}(\K_{\J};\F)
    \end{align*} 
    Equality happens if and only if $\text{null }i_q(J)+\text{null }i_{q-1}(J)=0$ or equivalently, $\text{null }i_q(J)=\text{null }i_{q-1}(J)=0$, if we impose this condition for every $q$ and $J$ we get that this is equivalent to the definition of tightness.
\end{proof}
\begin{thm}\label{truemain}
    Let $\F$ be a field and $\K$ a triangulated closed orientable $n$-manifold on $m$ vertices. Then
    \[\beta(\ZZ_\K;\F)\geq2^{m-1}(\beta(\K;\F)-2)+2\]
    with equality if and only if $\K$ is $\F$-tight.
\end{thm}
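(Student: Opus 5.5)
Hochster's decomposition gives $\beta(\ZZ_\K;\F)=\sum_{J\subseteq[m]}\tilde\beta(\K_J;\F)$. The empty subset contributes $\tilde\beta(\K_\emptyset)=1$, since $\tilde H_{-1}$ of the empty complex is $\F$. So the plan is to pair each $J$ with its complement $J^c$ and apply Lemma \ref{main} summed over all $q$. For a single pair, that lemma gives
\[\beta(\K)=\sum_q\beta_q(\K)\le \sum_q\big(\beta_q(\K_J)+\beta_{n-q}(\K_{J^c})\big)=\beta(\K_J)+\beta(\K_{J^c}),\]
with equality for all $J$ exactly when $\K$ is $\F$-tight. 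To see this, note that equality in the sum holds iff it holds for every $q$, because each termwise inequality points the same way. Lemma \ref{main} then says that equality for every $J$ and every $q$ is equivalent to tightness.

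Next I convert to reduced Betti numbers. For $J\ne\emptyset$ we have $\tilde\beta(\K_J)=\beta(\K_J)-1$, while $\tilde\beta(\K_\emptyset)=1$ and $\beta(\K_\emptyset)=0$. Also, $\K_{[m]}=\K$, and $\tilde\beta(\K)=\beta(\K)-1$ because $\K$ is nonempty.

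The $2^m$ subsets split into $2^{m-1}$ unordered pairs $\{J,J^c\}$. For the $2^{m-1}-1$ pairs with both $J$ and $J^c$ nonempty, summing the inequality gives
\[\tilde\beta(\K_J)+\tilde\beta(\K_{J^c})\ge \beta(\K)-2.\]
For the pair $\{\emptyset,[m]\}$, the contribution is $1+\beta(\K)-1=\beta(\K)$. This is exactly $(\beta(\K)-2)+2$, so that pair always realises equality; indeed Lemma \ref{main} with $J=\emptyset$ is trivially sharp. Adding everything up,
\[\beta(\ZZ_\K)\ge (2^{m-1}-1)(\beta(\K)-2)+\beta(\K)=2^{m-1}(\beta(\K)-2)+2.\]

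Equality holds iff every pair attains equality. By the summed version of Lemma \ref{main}, that happens iff every $J$ attains equality for all $q$, i.e. iff $\K$ is $\F$-tight.

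The only delicate point is the bookkeeping of reduced versus unreduced homology at the empty set and at $\K$ itself, and making sure the empty pair contributes exactly $\beta(\K)$ and not $\beta(\K)-2$. One also has to check that Lemma \ref{main} is legitimately applied with $J$ or $J^c$ empty. The long exact sequence of Proposition \ref{les} still holds there, since $\K-\emptyset=\K$.
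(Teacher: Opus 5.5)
Your proof is correct and follows essentially the same route as the paper: sum Lemma \ref{main} over $q$, then over subsets, and convert to reduced Betti numbers via Hochster's decomposition. The only difference is bookkeeping. The paper sums over all $2^m$ subsets, counting each complementary pair twice, and uses the global identity $\sum_J\beta(\K_J)=\beta(\ZZ_\K)+2^m-2$; you instead group complementary pairs and treat $\{\emptyset,[m]\}$ separately, which makes the equality case a bit more explicit.
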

\begin{proof}
    Fix a subset $J$ and add the inequality in Lemma 3.1 over all indices $q$
    \[\beta(\K)=\sum_q\beta_q(\K)\leq \sum_q\beta_q(\K_J)+\sum_q\beta_{n-q}(\K_{\J})=\beta(\K_J)+\beta(\K_\J).\]
    
    If we add over all subsets of $[m]$ we get that
    \begin{equation}\label{sumoveralJ}
        2^m\beta(\K)\leq\sum_{J\subseteq[m]}\beta(\K_J)+\sum\beta(\K_\J)=2\sum_{J\subseteq[m]}\beta(\K_J).
    \end{equation}

    \begin{align*}
    \beta(\ZZ_\K)&=\sum_{J\subseteq[m]}\tilde{\beta}(\K_J)\\
        	       &=\sum_{\substack{J\subseteq[m]\\q>0}}\beta_q(\K_J)+\sum_{\emptyset\neq J\subseteq[m]}(\beta_0(\K_J)-1)+\tilde{\beta}_{-1}(\K_\emptyset)\\
                   &=\sum_{J\subseteq[m]}\beta(\K_J)-(2^{m}-1)+1\\
                   \beta(\ZZ_\K)+2^m-2&=\sum_{J\subseteq[m]}\beta(\K_J).    
    \end{align*}
    Putting this together with Inequality (\ref{sumoveralJ}) gives us that 
    \[2^{m-1}\beta(\K)\leq\sum_{J\subseteq[m]}\beta(\K_J)=\beta(\ZZ_\K)+2^m-2.\]
    or equivalently
    \[\beta(\ZZ_\K)\geq 2^{m-1}(\beta(\K)-2)+2.\]
    The equality condition is inherited from the condition in Lemma \ref{main}.
\end{proof}
\begin{exm}
    Consider the six vertex minimal triangulation of $\R P^2$. The Betti numbers of its moment-angle complex are\\
    \begin{minipage}{0.6\textwidth}
    \centering
    \begin{tikzpicture}[x=0.75pt,y=0.75pt,yscale=-0.6,xscale=0.6]
%uncomment if require: \path (0,300); %set diagram left start at 0, and has height of 300
%Shape: Triangle [id:dp8479688764489004] 
\draw  [draw opacity=0][fill={rgb, 255:red, 165; green, 242; blue, 224 }  ,fill opacity=0.58 ] (328.59,259.21) -- (421.15,205.77) -- (236.02,205.77) -- cycle ;
%Shape: Rectangle [id:dp5653748284957785] 
\draw  [draw opacity=0][fill={rgb, 255:red, 165; green, 242; blue, 224 }  ,fill opacity=0.58 ] (236.02,98.89) -- (421.15,98.89) -- (421.15,205.77) -- (236.02,205.77) -- cycle ;
%Shape: Triangle [id:dp07991422788361158] 
\draw  [draw opacity=0][fill={rgb, 255:red, 165; green, 242; blue, 224 }  ,fill opacity=0.58 ] (328.59,45.44) -- (421.15,98.89) -- (236.02,98.89) -- cycle ;
%Straight Lines [id:da7536316361382323] 
\draw [line width=1.5]    (421.15,98.89) -- (328.59,104.89) ;
%Straight Lines [id:da748508395568277] 
\draw [line width=1.5]    (236.02,98.89) -- (328.59,104.89) ;
%Shape: Boxed Line [id:dp4568724762596279] 
\draw [line width=1.5]    (421.15,98.89) -- (328.59,45.44) ;
%Shape: Boxed Line [id:dp8885587753239197] 
\draw [line width=1.5]    (328.59,45.44) -- (236.02,98.89) ;
%Shape: Boxed Line [id:dp4585856613350665] 
\draw [line width=1.5]    (328.59,104.89) -- (328.59,45.44) ;
%Straight Lines [id:da5519091254218191] 
\draw [line width=1.5]    (287.5,176.05) -- (369.67,176.05) ;
%Shape: Boxed Line [id:dp7774011573314006] 
\draw [line width=1.5]    (328.59,259.21) -- (236.02,205.77) ;
%Shape: Boxed Line [id:dp4613772043768256] 
\draw [line width=1.5]    (421.15,205.77) -- (328.59,259.21) ;
%Straight Lines [id:da42912716533247686] 
\draw [line width=1.5]    (328.59,259.21) -- (354.31,207.12) -- (369.67,176.05) ;
%Straight Lines [id:da8819867824687955] 
\draw [line width=1.5]    (328.59,259.21) -- (287.5,176.05) ;
%Shape: Boxed Line [id:dp7349503812082099] 
\draw [line width=1.5]    (421.15,205.77) -- (421.15,98.89) ;
%Shape: Boxed Line [id:dp2716505533111577] 
\draw [line width=1.5]    (236.02,205.77) -- (236.02,98.89) ;
%Straight Lines [id:da5105930555564036] 
\draw [line width=1.5]    (328.59,104.89) -- (369.67,176.05) ;
%Straight Lines [id:da45376814183343184] 
\draw [line width=1.5]    (328.59,104.89) -- (287.5,176.05) ;
%Straight Lines [id:da06624525955939187] 
\draw [line width=1.5]    (421.15,98.89) -- (369.67,176.05) ;
%Straight Lines [id:da5259627017286019] 
\draw [line width=1.5]    (236.02,98.89) -- (287.5,176.05) ;
%Shape: Ellipse [id:dp6955576024263941] 
\draw  [fill={rgb, 255:red, 80; green, 227; blue, 194 }  ,fill opacity=1 ] (414.56,98.89) .. controls (414.56,95.25) and (417.51,92.3) .. (421.15,92.3) .. controls (424.79,92.3) and (427.74,95.25) .. (427.74,98.89) .. controls (427.74,102.52) and (424.79,105.47) .. (421.15,105.47) .. controls (417.51,105.47) and (414.56,102.52) .. (414.56,98.89) -- cycle ;
%Shape: Ellipse [id:dp9364258513279022] 
\draw  [fill={rgb, 255:red, 80; green, 227; blue, 194 }  ,fill opacity=1 ] (322,259.21) .. controls (322,255.57) and (324.95,252.62) .. (328.59,252.62) .. controls (332.22,252.62) and (335.17,255.57) .. (335.17,259.21) .. controls (335.17,262.85) and (332.22,265.8) .. (328.59,265.8) .. controls (324.95,265.8) and (322,262.85) .. (322,259.21) -- cycle ;
%Shape: Ellipse [id:dp8409416505641167] 
\draw  [fill={rgb, 255:red, 80; green, 227; blue, 194 }  ,fill opacity=1 ] (229.43,98.89) .. controls (229.43,95.25) and (232.38,92.3) .. (236.02,92.3) .. controls (239.66,92.3) and (242.61,95.25) .. (242.61,98.89) .. controls (242.61,102.52) and (239.66,105.47) .. (236.02,105.47) .. controls (232.38,105.47) and (229.43,102.52) .. (229.43,98.89) -- cycle ;
%Shape: Ellipse [id:dp30845781803257544] 
\draw  [fill={rgb, 255:red, 80; green, 227; blue, 194 }  ,fill opacity=1 ] (322,45.44) .. controls (322,41.81) and (324.95,38.86) .. (328.59,38.86) .. controls (332.22,38.86) and (335.17,41.81) .. (335.17,45.44) .. controls (335.17,49.08) and (332.22,52.03) .. (328.59,52.03) .. controls (324.95,52.03) and (322,49.08) .. (322,45.44) -- cycle ;
%Shape: Boxed Line [id:dp9240590507509217] 
\draw [line width=1.5]    (287.5,176.05) -- (236.02,205.77) ;
%Shape: Boxed Line [id:dp5920946731589114] 
\draw [line width=1.5]    (421.15,205.77) -- (369.67,176.05) ;
%Shape: Ellipse [id:dp5820675972697273] 
\draw  [fill={rgb, 255:red, 80; green, 227; blue, 194 }  ,fill opacity=1 ] (280.91,176.05) .. controls (280.91,172.41) and (283.86,169.46) .. (287.5,169.46) .. controls (291.14,169.46) and (294.09,172.41) .. (294.09,176.05) .. controls (294.09,179.68) and (291.14,182.63) .. (287.5,182.63) .. controls (283.86,182.63) and (280.91,179.68) .. (280.91,176.05) -- cycle ;
%Shape: Ellipse [id:dp4977215643693782] 
\draw  [fill={rgb, 255:red, 80; green, 227; blue, 194 }  ,fill opacity=1 ] (322,104.89) .. controls (322,101.25) and (324.95,98.3) .. (328.59,98.3) .. controls (332.22,98.3) and (335.17,101.25) .. (335.17,104.89) .. controls (335.17,108.53) and (332.22,111.48) .. (328.59,111.48) .. controls (324.95,111.48) and (322,108.53) .. (322,104.89) -- cycle ;
%Shape: Ellipse [id:dp8660612362431019] 
\draw  [fill={rgb, 255:red, 80; green, 227; blue, 194 }  ,fill opacity=1 ] (363.08,176.05) .. controls (363.08,172.41) and (366.03,169.46) .. (369.67,169.46) .. controls (373.31,169.46) and (376.26,172.41) .. (376.26,176.05) .. controls (376.26,179.68) and (373.31,182.63) .. (369.67,182.63) .. controls (366.03,182.63) and (363.08,179.68) .. (363.08,176.05) -- cycle ;
%Shape: Ellipse [id:dp4084990867281686] 
\draw  [fill={rgb, 255:red, 80; green, 227; blue, 194 }  ,fill opacity=1 ] (229.43,205.77) .. controls (229.43,202.13) and (232.38,199.18) .. (236.02,199.18) .. controls (239.66,199.18) and (242.61,202.13) .. (242.61,205.77) .. controls (242.61,209.41) and (239.66,212.36) .. (236.02,212.36) .. controls (232.38,212.36) and (229.43,209.41) .. (229.43,205.77) -- cycle ;
%Shape: Ellipse [id:dp6420951160235122] 
\draw  [fill={rgb, 255:red, 80; green, 227; blue, 194 }  ,fill opacity=1 ] (414.56,205.77) .. controls (414.56,202.13) and (417.51,199.18) .. (421.15,199.18) .. controls (424.79,199.18) and (427.74,202.13) .. (427.74,205.77) .. controls (427.74,209.41) and (424.79,212.36) .. (421.15,212.36) .. controls (417.51,212.36) and (414.56,209.41) .. (414.56,205.77) -- cycle ;

% Text Node
\draw (328,20.4) node [anchor=north west][inner sep=0.75pt]    {$1$};
% Text Node
\draw (325,269.4) node [anchor=north west][inner sep=0.75pt]    {$1$};
% Text Node
\draw (218,81.4) node [anchor=north west][inner sep=0.75pt]    {$2$};
% Text Node
\draw (429.74,209.17) node [anchor=north west][inner sep=0.75pt]    {$2$};
% Text Node
\draw (218,208.4) node [anchor=north west][inner sep=0.75pt]    {$3$};
% Text Node
\draw (428,81.4) node [anchor=north west][inner sep=0.75pt]    {$3$};
% Text Node
\draw (282,146.4) node [anchor=north west][inner sep=0.75pt]    {$4$};
% Text Node
\draw (337,82.4) node [anchor=north west][inner sep=0.75pt]    {$5$};
% Text Node
\draw (347,178.4) node [anchor=north west][inner sep=0.75pt]    {$6$};

\end{tikzpicture}

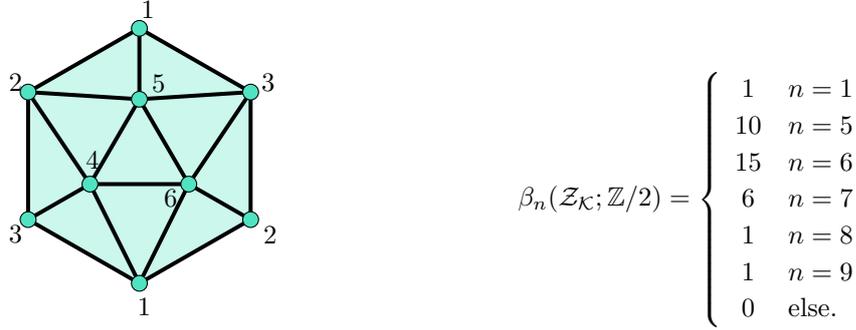
\captionof{figure}{Minimal triangulation of $\R P^2$}
    \end{minipage}\begin{minipage}{0.3\textwidth}
     \[\beta_n(\ZZ_\K;\Z/2)=\left\{\begin{array}{cl}
        1 & n=1  \\
        10 & n=5\\
        15 & n=6\\
        6 & n=7\\
        1 & n=8\\
        1 & n=9\\
        0 & \text{else.}
  \end{array}\right.\]
  \end{minipage}\\
  \phantom{}\\
  Notice that this satisfies the equality in the previous theorem since $\beta(\R P^2;\Z/2)=3$ and \[\beta(\ZZ_\K)=34=2^5(3-2)+2=2^{6-1}(\beta(\K)-2)+2,\]
  verifying this way the complex is tight. The above equation replaces the need to verify injectiveness of $2^6$ maps to show tightness of the complex.
\end{exm}

\section{A duality for tight complexes}
    It is of particular interest to understand the topology of $\ZZ_\K$ when $\K$ is a manifold. However, many of the topological properties of $\K$ as a manifold translate poorly to those of $\ZZ_\K$, in particular, $H^*(\ZZ_\K)$ will not be a Poincaré duality algebra in general for a manifold $\K$, in fact, $\ZZ_\K$ satisfies Poincaré duality if and only if $\K$ is a $\text{Gorensetein}^*$ complex \cite[Theorem 3.4.5]{cohen-mac-rings}.\\

    In this section, we present our second theorem: a duality map in double homology that appears in the special case of $\K$ being a tight triangulation.\\
    
    For every $q\in \Z$, the collection of maps $i_q(J)$, $p_q(J)$ and $\delta_q(J)$ from Proposition \ref{main} assemble into natural transformations $i_q$, $p_q$ $\delta_q$ which fit in the following exact sequence of functors
    
    \[H_q(\K_-)\xrightarrow{i_q}\Delta(H_q(\K))\xrightarrow{p_q}H^{n-q}(\K_{-^c}).\]

    This gives us the following three short exact sequences
    \begin{align}
        0\to \ker i_q\xhookrightarrow{} H_q&(\K_-)\xrightarrow{i_q} \im i_q\to 0,\\
        0\to \im i_q\xhookrightarrow{} \Delta_q&(\K)\xrightarrow{p_q} \im p_q\to 0,\\
        0\to \im p_q\xhookrightarrow{} H^{n-q}&(\K_{-^c})\xrightarrow{\delta_q} \ker i_{q-1}\to 0.
    \end{align}
Since $\Delta(H_q(\K))$ is acyclic, the connecting homomorphisms $\partial_{q}:H^{*}(\im p_q)\to H^{*+1}(\im i_q)$ are isomorphisms. This lets us define for every integer $q$ the following map in cohomology: \begin{equation}{\label{Etaq}}
    \eta_q:=\partial_q^{-1}i_q^*: H^*(H_q(\K_-))\to H^{*-1}(H^{n-q}(\K_{-^c})).
\end{equation}

\begin{thm}\label{maindouble}
    The map $\eta_q$ is an isomorphism whenever $\K$ is tight.      
\end{thm}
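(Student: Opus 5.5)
The plan is to show that tightness makes the first and third short exact sequences degenerate, so that $\eta_q$ becomes a composite of two isomorphisms. By Definition \ref{tightdef}, if $\K$ is $\F$-tight then $i_q(J)$ is injective for every $J\subseteq[m]$ and every $q$. Hence the natural transformations $\ker i_q$ and $\ker i_{q-1}$ vanish as functors $2^{[m]}\to\A$.

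\textbf{First sequence.} With $\ker i_q=0$, sequence (1) says that $i_q:H_q(\K_-)\to\im i_q$ is an isomorphism of functors. Since $C^*$ is a functor, $C^*(i_q)$ is an isomorphism of cochain complexes. So the induced map $i_q^*:H^*(H_q(\K_-))\to H^*(\im i_q)$ is an isomorphism.

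\textbf{Third sequence.} With $\ker i_{q-1}=0$, sequence (3) says that the inclusion $\im p_q\hookrightarrow H^{n-q}(\K_{-^c})$ is an isomorphism of functors. It therefore induces an isomorphism $H^*(\im p_q)\cong H^*(H^{n-q}(\K_{-^c}))$.

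\textbf{Second sequence.} The functor $C^*$ is exact, so applying it to sequence (2) gives a short exact sequence of cochain complexes and hence a long exact sequence in cohomology. The middle term is $\Delta(H_q(\K))$, which is acyclic by Corollary \ref{acyclic}. Therefore the connecting maps $\partial_q:H^*(\im p_q)\to H^{*+1}(\im i_q)$ are isomorphisms, as already noted before \eqref{Etaq}. This holds without any tightness assumption.

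\textbf{Assembling.} Combining the three steps, $\eta_q=\partial_q^{-1}\circ i_q^*$, followed by the identification of $H^*(\im p_q)$ with $H^*(H^{n-q}(\K_{-^c}))$ from the third step. This is a composite of isomorphisms, so $\eta_q$ is an isomorphism, and the degree shift is the one recorded in \eqref{Etaq}.

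The only point needing care is that ``tight'' means $\F$-tight for the same field $\F$ used in the duality. This must hold because the natural long exact sequence of Proposition \ref{les}, which defines $i_q$, $p_q$ and $\delta_q$, is taken with $\F$ coefficients. I do not expect a genuine obstacle here.

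For completeness one could also record a partial converse: $\eta_q$ is an isomorphism for all $q$ as soon as $H^*(\ker i_q)=0$ and $H^*(\ker i_{q-1})=0$. This follows from the long exact sequences in cohomology of (1) and (3). Tightness is simply the cleanest sufficient condition for it.
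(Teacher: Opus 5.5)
Your proof is correct and follows the paper's argument. Tightness kills $\ker i_q$ and $\ker i_{q-1}$, so $i_q^*$ and the inclusion-induced map $H^*(\im p_q)\to H^*(H^{n-q}(\K_{-^c}))$ are isomorphisms, and $\eta_q$ is their composite with the inverse of the connecting isomorphism $\partial_q$. The paper reads this off the long exact sequences of (1) and (3), whereas you note directly that the maps of functors are already isomorphisms, which amounts to the same thing.

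One terminological point: your closing remark is not a converse but a strengthening. The vanishing of $H^*(\ker i_q)$ and $H^*(\ker i_{q-1})$ is a weaker hypothesis than tightness that still gives the conclusion.
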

\begin{proof}
    The long exact sequences in cohomology of sequences (2) and (3) fit in the following commutative diagram with exact rows for every $l$:
        \[\begin{tikzcd}
{H^l(\ker i_q)} & {H^l(H_q)} & {H^l(\im i_q)} & {H^{l+1}(\ker i_q)} \\
 {H^{l-1}(\ker i_{q-1})} & {H^{l-1}(H^{n-q})} & {H^{l-1}(\im p_q)} & {H^{l-2}(\ker i_{q-1})}
	\arrow[from=1-1, to=1-2]
	\arrow["i_q^*",from=1-2, to=1-3]
	\arrow["\eta_{q}", from=1-2, to=2-2]
	\arrow[from=1-3, to=1-4]
	\arrow["\partial_q^{-1}", from=1-3, to=2-3]
	\arrow["1^*" ,from=2-3, to=2-2]
	\arrow[from=2-4, to=2-3]
    \arrow[from=2-2,to=2-1]
\end{tikzcd}\]
where $\eta$ is the unique map making that diagram commute. If $\K$ is $\F$-tight then $\ker{i_q}=0$ for every $q$, meaning $i_q^*$ and $1^*$ are isomorphisms and so is $\eta_q$.
\end{proof}

 \begin{exm}
     Consider the triangulations of $\mathbb{T}^2$ in Figures 2 and 3.\\

    \begin{minipage}[t]{0.5\textwidth}
    \centering
      \tikzset{every picture/.style={line width=0.75pt}} %set default line width to 0.75pt        

\begin{tikzpicture}[x=0.75pt,y=0.75pt,yscale=-.6,xscale=.6]
%uncomment if require: \path (0,300); %set diagram left start at 0, and has height of 300

%Shape: Rectangle [id:dp6510939799134927] 
\draw  [fill={rgb, 255:red, 135; green, 237; blue, 193 }  ,fill opacity=0.28 ][line width=1.5]  (207.78,51.34) -- (397.72,51.34) -- (397.72,241.28) -- (207.78,241.28) -- cycle ;
%Straight Lines [id:da9789142507615645] 
\draw [line width=1.5]    (207.78,241.28) -- (397.72,51.34) ;
%Straight Lines [id:da22168092660329408] 
\draw [line width=1.5]    (333.99,51.72) -- (333.99,114.66) ;
%Straight Lines [id:da7564414758167872] 
\draw [line width=1.5]    (271.09,179.47) -- (271.09,241.46) ;
%Straight Lines [id:da7478700010436139] 
\draw [line width=1.5]    (333.99,113.9) -- (397.72,113.9) ;
%Straight Lines [id:da7961982139419536] 
\draw [line width=1.5]    (208.15,178.72) -- (271.09,178.72) ;
%Shape: Circle [id:dp21743872903953587] 
\draw  [fill={rgb, 255:red, 80; green, 227; blue, 194 }  ,fill opacity=1 ] (390.93,51.34) .. controls (390.93,47.6) and (393.97,44.56) .. (397.72,44.56) .. controls (401.46,44.56) and (404.5,47.6) .. (404.5,51.34) .. controls (404.5,55.09) and (401.46,58.13) .. (397.72,58.13) .. controls (393.97,58.13) and (390.93,55.09) .. (390.93,51.34) -- cycle ;
%Straight Lines [id:da4197777021022868] 
\draw [line width=1.5]    (208,113.9) -- (271.09,51.91) ;
%Straight Lines [id:da21373371244504968] 
\draw [line width=1.5]    (333.99,241.28) -- (397.72,178.72) ;
%Shape: Circle [id:dp5331023902798833] 
\draw  [fill={rgb, 255:red, 80; green, 227; blue, 194 }  ,fill opacity=1 ] (390.93,113.9) .. controls (390.93,110.16) and (393.97,107.12) .. (397.72,107.12) .. controls (401.46,107.12) and (404.5,110.16) .. (404.5,113.9) .. controls (404.5,117.65) and (401.46,120.69) .. (397.72,120.69) .. controls (393.97,120.69) and (390.93,117.65) .. (390.93,113.9) -- cycle ;
%Shape: Ellipse [id:dp8365061758269366] 
\draw  [fill={rgb, 255:red, 80; green, 227; blue, 194 }  ,fill opacity=1 ] (390.93,241.28) .. controls (390.93,237.53) and (393.97,234.49) .. (397.72,234.49) .. controls (401.46,234.49) and (404.5,237.53) .. (404.5,241.28) .. controls (404.5,245.02) and (401.46,248.06) .. (397.72,248.06) .. controls (393.97,248.06) and (390.93,245.02) .. (390.93,241.28) -- cycle ;
%Shape: Ellipse [id:dp23401530811959304] 
\draw  [fill={rgb, 255:red, 80; green, 227; blue, 194 }  ,fill opacity=1 ] (201,241.28) .. controls (201,237.53) and (204.04,234.49) .. (207.78,234.49) .. controls (211.53,234.49) and (214.57,237.53) .. (214.57,241.28) .. controls (214.57,245.02) and (211.53,248.06) .. (207.78,248.06) .. controls (204.04,248.06) and (201,245.02) .. (201,241.28) -- cycle ;
%Shape: Ellipse [id:dp478253713878772] 
\draw  [fill={rgb, 255:red, 80; green, 227; blue, 194 }  ,fill opacity=1 ] (201.36,178.72) .. controls (201.36,174.97) and (204.4,171.94) .. (208.15,171.94) .. controls (211.89,171.94) and (214.93,174.97) .. (214.93,178.72) .. controls (214.93,182.47) and (211.89,185.5) .. (208.15,185.5) .. controls (204.4,185.5) and (201.36,182.47) .. (201.36,178.72) -- cycle ;
%Shape: Circle [id:dp9873805905330107] 
\draw  [fill={rgb, 255:red, 80; green, 227; blue, 194 }  ,fill opacity=1 ] (201,51.34) .. controls (201,47.6) and (204.04,44.56) .. (207.78,44.56) .. controls (211.53,44.56) and (214.57,47.6) .. (214.57,51.34) .. controls (214.57,55.09) and (211.53,58.13) .. (207.78,58.13) .. controls (204.04,58.13) and (201,55.09) .. (201,51.34) -- cycle ;
%Shape: Ellipse [id:dp7122742550340906] 
\draw  [fill={rgb, 255:red, 80; green, 227; blue, 194 }  ,fill opacity=1 ] (327.21,241.28) .. controls (327.21,237.53) and (330.24,234.49) .. (333.99,234.49) .. controls (337.74,234.49) and (340.77,237.53) .. (340.77,241.28) .. controls (340.77,245.02) and (337.74,248.06) .. (333.99,248.06) .. controls (330.24,248.06) and (327.21,245.02) .. (327.21,241.28) -- cycle ;
%Shape: Ellipse [id:dp10744862191878146] 
\draw  [fill={rgb, 255:red, 80; green, 227; blue, 194 }  ,fill opacity=1 ] (264.31,51.91) .. controls (264.31,48.16) and (267.35,45.12) .. (271.09,45.12) .. controls (274.84,45.12) and (277.88,48.16) .. (277.88,51.91) .. controls (277.88,55.65) and (274.84,58.69) .. (271.09,58.69) .. controls (267.35,58.69) and (264.31,55.65) .. (264.31,51.91) -- cycle ;
%Straight Lines [id:da06020648210014301] 
\draw [line width=1.5]    (208,113.9) -- (271.09,178.72) ;
%Straight Lines [id:da5929203880441459] 
\draw [line width=1.5]    (208,113.9) -- (333.99,51.72) ;
%Straight Lines [id:da2574369288486765] 
\draw [line width=1.5]    (271.2,177.31) -- (333.99,51.72) ;
%Straight Lines [id:da6494712225439029] 
\draw [line width=1.5]    (271.09,241.46) -- (335.91,114.66) ;
%Straight Lines [id:da6957624500039706] 
\draw [line width=1.5]    (335.91,114.66) -- (397.72,178.72) ;
%Straight Lines [id:da7339279110798539] 
\draw [line width=1.5]    (271.09,241.46) -- (397.72,178.72) ;
%Shape: Ellipse [id:dp6860437780966525] 
\draw  [fill={rgb, 255:red, 80; green, 227; blue, 194 }  ,fill opacity=1 ] (264.31,241.46) .. controls (264.31,237.72) and (267.35,234.68) .. (271.09,234.68) .. controls (274.84,234.68) and (277.88,237.72) .. (277.88,241.46) .. controls (277.88,245.21) and (274.84,248.25) .. (271.09,248.25) .. controls (267.35,248.25) and (264.31,245.21) .. (264.31,241.46) -- cycle ;
%Shape: Circle [id:dp5893558539501891] 
\draw  [fill={rgb, 255:red, 80; green, 227; blue, 194 }  ,fill opacity=1 ] (264.31,179.47) .. controls (264.31,175.73) and (267.35,172.69) .. (271.09,172.69) .. controls (274.84,172.69) and (277.88,175.73) .. (277.88,179.47) .. controls (277.88,183.22) and (274.84,186.26) .. (271.09,186.26) .. controls (267.35,186.26) and (264.31,183.22) .. (264.31,179.47) -- cycle ;
%Shape: Ellipse [id:dp24291604741511197] 
\draw  [fill={rgb, 255:red, 80; green, 227; blue, 194 }  ,fill opacity=1 ] (201.22,113.9) .. controls (201.22,110.16) and (204.26,107.12) .. (208,107.12) .. controls (211.75,107.12) and (214.79,110.16) .. (214.79,113.9) .. controls (214.79,117.65) and (211.75,120.69) .. (208,120.69) .. controls (204.26,120.69) and (201.22,117.65) .. (201.22,113.9) -- cycle ;
%Shape: Ellipse [id:dp9502622825109055] 
\draw  [fill={rgb, 255:red, 80; green, 227; blue, 194 }  ,fill opacity=1 ] (329.13,114.66) .. controls (329.13,110.91) and (332.17,107.87) .. (335.91,107.87) .. controls (339.66,107.87) and (342.7,110.91) .. (342.7,114.66) .. controls (342.7,118.4) and (339.66,121.44) .. (335.91,121.44) .. controls (332.17,121.44) and (329.13,118.4) .. (329.13,114.66) -- cycle ;
%Shape: Circle [id:dp22006859303522475] 
\draw  [fill={rgb, 255:red, 80; green, 227; blue, 194 }  ,fill opacity=1 ] (390.93,178.72) .. controls (390.93,174.97) and (393.97,171.94) .. (397.72,171.94) .. controls (401.46,171.94) and (404.5,174.97) .. (404.5,178.72) .. controls (404.5,182.47) and (401.46,185.5) .. (397.72,185.5) .. controls (393.97,185.5) and (390.93,182.47) .. (390.93,178.72) -- cycle ;
%Shape: Ellipse [id:dp7783128215466757] 
\draw  [fill={rgb, 255:red, 80; green, 227; blue, 194 }  ,fill opacity=1 ] (327.21,51.72) .. controls (327.21,47.97) and (330.24,44.94) .. (333.99,44.94) .. controls (337.74,44.94) and (340.77,47.97) .. (340.77,51.72) .. controls (340.77,55.47) and (337.74,58.5) .. (333.99,58.5) .. controls (330.24,58.5) and (327.21,55.47) .. (327.21,51.72) -- cycle ;

% Text Node
\draw (209.78,54.74) node [anchor=north west][inner sep=0.75pt]    {$1$};
% Text Node
\draw (399.72,244.68) node [anchor=north west][inner sep=0.75pt]    {$1$};
% Text Node
\draw (209.78,244.68) node [anchor=north west][inner sep=0.75pt]    {$1$};
% Text Node
\draw (399.72,54.74) node [anchor=north west][inner sep=0.75pt]    {$1$};
% Text Node
\draw (273.09,55.31) node [anchor=north west][inner sep=0.75pt]    {$2$};
% Text Node
\draw (273.09,244.86) node [anchor=north west][inner sep=0.75pt]    {$2$};
% Text Node
\draw (335.99,55.12) node [anchor=north west][inner sep=0.75pt]    {$3$};
% Text Node
\draw (335.99,244.68) node [anchor=north west][inner sep=0.75pt]    {$3$};
% Text Node
\draw (210,124.09) node [anchor=north west][inner sep=0.75pt]    {$4$};
% Text Node
\draw (399.72,117.3) node [anchor=north west][inner sep=0.75pt]    {$4$};
% Text Node
\draw (399.72,182.12) node [anchor=north west][inner sep=0.75pt]    {$5$};
% Text Node
\draw (210.15,188.9) node [anchor=north west][inner sep=0.75pt]    {$5$};
% Text Node
\draw (279.88,182.87) node [anchor=north west][inner sep=0.75pt]    {$7$};
% Text Node
\draw (337.91,124.84) node [anchor=north west][inner sep=0.75pt]    {$6$};

\end{tikzpicture}
      
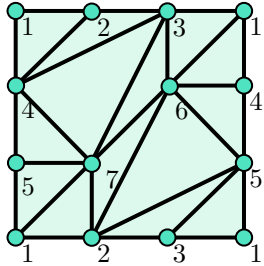
\captionof{figure}{Minimal triangulation $\K^1$ of $\mathbb{T}^2$}
    \end{minipage}\begin{minipage}[t]{0.5\textwidth}
    \centering
    \tikzset{every picture/.style={line width=0.75pt}} %set default line width to 0.75pt        

\begin{tikzpicture}[x=0.75pt,y=0.75pt,yscale=-.6,xscale=.6]
%uncomment if require: \path (0,300); %set diagram left start at 0, and has height of 300

%Shape: Rectangle [id:dp6510939799134927] 
\draw  [fill={rgb, 255:red, 135; green, 237; blue, 193 }  ,fill opacity=0.28 ][line width=1.5]  (207.78,51.34) -- (397.72,51.34) -- (397.72,241.28) -- (207.78,241.28) -- cycle ;
%Straight Lines [id:da9789142507615645] 
\draw [line width=1.5]    (207.78,241.28) -- (397.72,51.34) ;
%Straight Lines [id:da22168092660329408] 
\draw [line width=1.5]    (333.99,51.72) -- (333.99,241.28) ;
%Straight Lines [id:da7564414758167872] 
\draw [line width=1.5]    (271.09,51.91) -- (271.09,241.46) ;
%Straight Lines [id:da7478700010436139] 
\draw [line width=1.5]    (208,113.9) -- (397.72,113.9) ;
%Straight Lines [id:da7961982139419536] 
\draw [line width=1.5]    (208.15,178.72) -- (397.72,178.72) ;
%Shape: Circle [id:dp21743872903953587] 
\draw  [fill={rgb, 255:red, 80; green, 227; blue, 194 }  ,fill opacity=1 ] (390.93,51.34) .. controls (390.93,47.6) and (393.97,44.56) .. (397.72,44.56) .. controls (401.46,44.56) and (404.5,47.6) .. (404.5,51.34) .. controls (404.5,55.09) and (401.46,58.13) .. (397.72,58.13) .. controls (393.97,58.13) and (390.93,55.09) .. (390.93,51.34) -- cycle ;
%Straight Lines [id:da4197777021022868] 
\draw [line width=1.5]    (208,113.9) -- (271.09,51.91) ;
%Straight Lines [id:da0361550252960644] 
\draw [line width=1.5]    (208.15,178.72) -- (333.99,51.72) ;
%Straight Lines [id:da21373371244504968] 
\draw [line width=1.5]    (333.99,241.28) -- (397.72,178.72) ;
%Straight Lines [id:da5969888889692302] 
\draw [line width=1.5]    (271.09,241.46) -- (397.72,113.9) ;
%Shape: Circle [id:dp5331023902798833] 
\draw  [fill={rgb, 255:red, 80; green, 227; blue, 194 }  ,fill opacity=1 ] (390.93,113.9) .. controls (390.93,110.16) and (393.97,107.12) .. (397.72,107.12) .. controls (401.46,107.12) and (404.5,110.16) .. (404.5,113.9) .. controls (404.5,117.65) and (401.46,120.69) .. (397.72,120.69) .. controls (393.97,120.69) and (390.93,117.65) .. (390.93,113.9) -- cycle ;
%Shape: Circle [id:dp22006859303522475] 
\draw  [fill={rgb, 255:red, 80; green, 227; blue, 194 }  ,fill opacity=1 ] (390.93,178.72) .. controls (390.93,174.97) and (393.97,171.94) .. (397.72,171.94) .. controls (401.46,171.94) and (404.5,174.97) .. (404.5,178.72) .. controls (404.5,182.47) and (401.46,185.5) .. (397.72,185.5) .. controls (393.97,185.5) and (390.93,182.47) .. (390.93,178.72) -- cycle ;
%Shape: Ellipse [id:dp8365061758269366] 
\draw  [fill={rgb, 255:red, 80; green, 227; blue, 194 }  ,fill opacity=1 ] (390.93,241.28) .. controls (390.93,237.53) and (393.97,234.49) .. (397.72,234.49) .. controls (401.46,234.49) and (404.5,237.53) .. (404.5,241.28) .. controls (404.5,245.02) and (401.46,248.06) .. (397.72,248.06) .. controls (393.97,248.06) and (390.93,245.02) .. (390.93,241.28) -- cycle ;
%Shape: Ellipse [id:dp7783128215466757] 
\draw  [fill={rgb, 255:red, 80; green, 227; blue, 194 }  ,fill opacity=1 ] (327.21,51.72) .. controls (327.21,47.97) and (330.24,44.94) .. (333.99,44.94) .. controls (337.74,44.94) and (340.77,47.97) .. (340.77,51.72) .. controls (340.77,55.47) and (337.74,58.5) .. (333.99,58.5) .. controls (330.24,58.5) and (327.21,55.47) .. (327.21,51.72) -- cycle ;
%Shape: Circle [id:dp2337836226287241] 
\draw  [fill={rgb, 255:red, 80; green, 227; blue, 194 }  ,fill opacity=1 ] (327.62,177.68) .. controls (327.62,173.94) and (330.66,170.9) .. (334.41,170.9) .. controls (338.15,170.9) and (341.19,173.94) .. (341.19,177.68) .. controls (341.19,181.43) and (338.15,184.47) .. (334.41,184.47) .. controls (330.66,184.47) and (327.62,181.43) .. (327.62,177.68) -- cycle ;
%Shape: Ellipse [id:dp6860437780966525] 
\draw  [fill={rgb, 255:red, 80; green, 227; blue, 194 }  ,fill opacity=1 ] (264.31,241.46) .. controls (264.31,237.72) and (267.35,234.68) .. (271.09,234.68) .. controls (274.84,234.68) and (277.88,237.72) .. (277.88,241.46) .. controls (277.88,245.21) and (274.84,248.25) .. (271.09,248.25) .. controls (267.35,248.25) and (264.31,245.21) .. (264.31,241.46) -- cycle ;
%Shape: Ellipse [id:dp23401530811959304] 
\draw  [fill={rgb, 255:red, 80; green, 227; blue, 194 }  ,fill opacity=1 ] (201,241.28) .. controls (201,237.53) and (204.04,234.49) .. (207.78,234.49) .. controls (211.53,234.49) and (214.57,237.53) .. (214.57,241.28) .. controls (214.57,245.02) and (211.53,248.06) .. (207.78,248.06) .. controls (204.04,248.06) and (201,245.02) .. (201,241.28) -- cycle ;
%Shape: Ellipse [id:dp478253713878772] 
\draw  [fill={rgb, 255:red, 80; green, 227; blue, 194 }  ,fill opacity=1 ] (201.36,178.72) .. controls (201.36,174.97) and (204.4,171.94) .. (208.15,171.94) .. controls (211.89,171.94) and (214.93,174.97) .. (214.93,178.72) .. controls (214.93,182.47) and (211.89,185.5) .. (208.15,185.5) .. controls (204.4,185.5) and (201.36,182.47) .. (201.36,178.72) -- cycle ;
%Shape: Ellipse [id:dp24291604741511197] 
\draw  [fill={rgb, 255:red, 80; green, 227; blue, 194 }  ,fill opacity=1 ] (201.22,113.9) .. controls (201.22,110.16) and (204.26,107.12) .. (208,107.12) .. controls (211.75,107.12) and (214.79,110.16) .. (214.79,113.9) .. controls (214.79,117.65) and (211.75,120.69) .. (208,120.69) .. controls (204.26,120.69) and (201.22,117.65) .. (201.22,113.9) -- cycle ;
%Shape: Circle [id:dp9873805905330107] 
\draw  [fill={rgb, 255:red, 80; green, 227; blue, 194 }  ,fill opacity=1 ] (201,51.34) .. controls (201,47.6) and (204.04,44.56) .. (207.78,44.56) .. controls (211.53,44.56) and (214.57,47.6) .. (214.57,51.34) .. controls (214.57,55.09) and (211.53,58.13) .. (207.78,58.13) .. controls (204.04,58.13) and (201,55.09) .. (201,51.34) -- cycle ;
%Shape: Circle [id:dp7753962298947442] 
\draw  [fill={rgb, 255:red, 80; green, 227; blue, 194 }  ,fill opacity=1 ] (264.29,115.22) .. controls (264.29,111.47) and (267.32,108.44) .. (271.07,108.44) .. controls (274.81,108.44) and (277.85,111.47) .. (277.85,115.22) .. controls (277.85,118.97) and (274.81,122) .. (271.07,122) .. controls (267.32,122) and (264.29,118.97) .. (264.29,115.22) -- cycle ;
%Shape: Ellipse [id:dp9502622825109055] 
\draw  [fill={rgb, 255:red, 80; green, 227; blue, 194 }  ,fill opacity=1 ] (329.13,114.66) .. controls (329.13,110.91) and (332.17,107.87) .. (335.91,107.87) .. controls (339.66,107.87) and (342.7,110.91) .. (342.7,114.66) .. controls (342.7,118.4) and (339.66,121.44) .. (335.91,121.44) .. controls (332.17,121.44) and (329.13,118.4) .. (329.13,114.66) -- cycle ;
%Shape: Ellipse [id:dp7122742550340906] 
\draw  [fill={rgb, 255:red, 80; green, 227; blue, 194 }  ,fill opacity=1 ] (327.21,241.28) .. controls (327.21,237.53) and (330.24,234.49) .. (333.99,234.49) .. controls (337.74,234.49) and (340.77,237.53) .. (340.77,241.28) .. controls (340.77,245.02) and (337.74,248.06) .. (333.99,248.06) .. controls (330.24,248.06) and (327.21,245.02) .. (327.21,241.28) -- cycle ;
%Shape: Ellipse [id:dp10744862191878146] 
\draw  [fill={rgb, 255:red, 80; green, 227; blue, 194 }  ,fill opacity=1 ] (264.31,51.91) .. controls (264.31,48.16) and (267.35,45.12) .. (271.09,45.12) .. controls (274.84,45.12) and (277.88,48.16) .. (277.88,51.91) .. controls (277.88,55.65) and (274.84,58.69) .. (271.09,58.69) .. controls (267.35,58.69) and (264.31,55.65) .. (264.31,51.91) -- cycle ;
%Shape: Circle [id:dp5893558539501891] 
\draw  [fill={rgb, 255:red, 80; green, 227; blue, 194 }  ,fill opacity=1 ] (264.31,179.47) .. controls (264.31,175.73) and (267.35,172.69) .. (271.09,172.69) .. controls (274.84,172.69) and (277.88,175.73) .. (277.88,179.47) .. controls (277.88,183.22) and (274.84,186.26) .. (271.09,186.26) .. controls (267.35,186.26) and (264.31,183.22) .. (264.31,179.47) -- cycle ;

% Text Node
\draw (209.78,54.74) node [anchor=north west][inner sep=0.75pt]    {$1$};
% Text Node
\draw (399.72,244.68) node [anchor=north west][inner sep=0.75pt]    {$1$};
% Text Node
\draw (209.78,244.68) node [anchor=north west][inner sep=0.75pt]    {$1$};
% Text Node
\draw (399.72,54.74) node [anchor=north west][inner sep=0.75pt]    {$1$};
% Text Node
\draw (273.09,55.31) node [anchor=north west][inner sep=0.75pt]    {$2$};
% Text Node
\draw (260,248.4) node [anchor=north west][inner sep=0.75pt]    {$2$};
% Text Node
\draw (335.99,55.12) node [anchor=north west][inner sep=0.75pt]    {$3$};
% Text Node
\draw (335.99,244.68) node [anchor=north west][inner sep=0.75pt]    {$3$};
% Text Node
\draw (210,117.3) node [anchor=north west][inner sep=0.75pt]    {$4$};
% Text Node
\draw (399.72,117.3) node [anchor=north west][inner sep=0.75pt]    {$4$};
% Text Node
\draw (399.72,182.12) node [anchor=north west][inner sep=0.75pt]    {$5$};
% Text Node
\draw (210.15,182.12) node [anchor=north west][inner sep=0.75pt]    {$5$};
% Text Node
\draw (273.09,182.87) node [anchor=north west][inner sep=0.75pt]    {$8$};
% Text Node
\draw (336.41,181.08) node [anchor=north west][inner sep=0.75pt]    {$9$};
% Text Node
\draw (337.91,118.06) node [anchor=north west][inner sep=0.75pt]    {$7$};
% Text Node
\draw (273.07,118.62) node [anchor=north west][inner sep=0.75pt]    {$6$};

\end{tikzpicture}
    
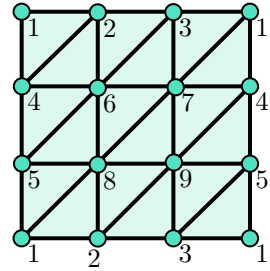
\captionof{figure}{Standard triangulation $\K^2$ of $\mathbb{T}^2$}.
     \end{minipage}\\
     
    With computational aid (see \cite{script}) and the main result of \cite{ruiz2025doubleuberposethomology} we can reliably compute $H^*(H_*(\K_-))$ for any complex $\K$, in particular, for these examples we get that:
      
          \[H^l(H_q(\K^1_-;\Z/2))=\left\{\begin{array}{cc}
             \Z/2  & \text{for }(q,l)=(0,1),(1,3),(1,5),(2,7) \\
             0  & \text{otherwise,} 
          \end{array}\right.\]
          
          \[H^l(H_q(\K^2_-;\Z/2))=\left\{\begin{array}{cc}
             (\Z/2)^2  & \text{for }(q,l)=(0,2)\text{ or }(1,4)\\
             \Z/2  &  \text{for }(q,l)=(1,7)\text{ or }(2,9)\\
             0 & \text{otherwise.}
          \end{array}\right.\]
    \end{exm}

    The duality from Theorem \ref{maindouble} in $H^*(H_*(\K^1_-))$ is evident, as in this example, the bidgree $(0,1)$ is dual to $(2,7)$, and $(1,3)$ is dual to $(1,5)$.
\bibliographystyle{alpha}
\bibliography{ref.bib}
\end{document}